\newtheorem{prop}{Proposition}
\newtheorem{lem}[prop]{Lemma}
\newtheorem{thm}[prop]{Theorem}
\newtheorem{cor}[prop]{Corollary}
\theoremstyle{definition}
\newtheorem{defn}[prop]{Definition}
\newcommand{\ddb}{\sqrt{-1}\partial\overline{\partial}}
\newcommand{\Bl}{\mathrm{Bl}}
\title[]{Extremal metrics on blowups along submanifolds}
\author[R. Seyyedali]{Reza Seyyedali}
\author[G. Sz\'ekelyhidi]{G\'abor Sz\'ekelyhidi}
\thanks{G.Sz. is supported in part by NSF grants DMS-1306298 and DMS-1350696.}
\address{Department of Mathematics, Boyd Graduate Studies Research
  Center, University of Georgia, Athens, GA 30602}
\email{rseyyeda@uga.edu}
\address{Department of Mathematics, University of Notre Dame, Notre
  Dame, IN 46556}  
\email{gszekely@nd.edu}
\begin{document}
\begin{abstract}
  We give conditions under which the blowup of an extremal K\"ahler manifold along a
  submanifold of codimension greater than two admits an extremal
  metric. This generalizes work of Arezzo-Pacard-Singer, who
  considered blowups in points. 
\end{abstract}
\maketitle

\section{Introduction}
A basic question in K\"ahler geometry is the existence of extremal 
metrics on K\"ahler manifolds, in the sense of Calabi~\cite{Cal82}. A
K\"ahler metric $\omega_M$ on $M$ is an extremal metric if the gradient $\nabla S(\omega_M)$
of its scalar curvature is a holomorphic vector field on $M$. The
Yau-Tian-Donaldson conjecture~\cite{Yau, Tian97, DonToric, GSzExt} relates
the existence of an extremal metric on a compact K\"ahler manifold to
an algebro-geometric stability condition, but so far there are only a
few existence results beyond the K\"ahler-Einstein case~\cite{Yau78,
  Tian97, CDS}. 

In this
paper, following the works of Arezzo-Pacard~\cite{AP06, AP09},
Arezzo-Pacard-Singer~\cite{APS06} and the second author~\cite{GSz10, GSz13_1} we
investigate the existence of an extremal metric on a blowup $\Bl_S M$
of $M$ along a smooth submanifold, assuming that $M$ admits an
extremal metric $\omega_M$. The main new feature in our work is that we
 allow $\dim S > 0$, while previous works focused
on blowups in points with the exception of Hashimoto~\cite{Ha15} who considered blowups of projective spaces in lines.  

In order to state our result we set up some notation. We suppose that
$S$ is a codimension-$k$ submanifold of $M$, and we write
$G$ for the group of Hamiltonian isometries of
$(M,\omega_M)$. There is an associated moment map
\[ \mu : M \to \mathfrak{g}^*, \]
normalized so that $\langle\mu, \xi\rangle$ has zero integral for each
$\xi\in\mathfrak{g}$. Denoting by $\mathscr{S}$ the space of
codimension-$k$ complex submanifolds of $M$, the group $G$ acts on
$\mathscr{S}$, preserving a natural symplectic form, and 
we have a moment map 
\[ \begin{aligned} \mu_{\mathscr{S}} : \mathscr{S} &\to \mathfrak{g}^*, \\
 S &\mapsto \int_S \mu\,\omega_M^{n-k}. 
\end{aligned}\]

We identify $\mathfrak{g}=\mathfrak{g}^*$ using the
$L^2$-product on Hamiltonian functions, and so we can naturally think
of $\mu_{\mathscr{S}}(S)$ as a vector field on $M$. 
In analogy with the result in \cite{GSz10}, in this paper we prove
the following. 
\begin{thm}\label{thm:extremal}
  Suppose that $S\in\mathscr{S}$ is a submanifold such that $\nabla
  S(\omega_M)$ and the vector
  field $\mu_{\mathscr{S}}(S)$ are tangent to $S$. Assume also that the
  codimension of $S$ is $k > 2$. Then $\Bl_S M$ admits
  an extremal metric in the class $[\omega_M] - \epsilon^2[E]$ for
  sufficiently small $\epsilon > 0$. 
\end{thm}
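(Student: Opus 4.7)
The plan is an adapted gluing and perturbation argument, directly generalizing the approach of Arezzo--Pacard--Singer with the blown-up point replaced by the submanifold $S$. The main steps are: (i) construct a family of approximate extremal metrics $\omega_\epsilon$ on $\Bl_S M$ in the class $[\omega_M]-\epsilon^2[E]$; (ii) develop a weighted Fredholm theory for the associated Lichnerowicz operator, uniform in $\epsilon$; and (iii) solve the extremal equation by a quantitative implicit function theorem, using the assumed tangency conditions to dispose of the finite-dimensional obstruction in the kernel of the linearization.

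For the approximate solution, a tubular neighborhood of the exceptional divisor $E=\mathbb{P}(N_{S/M})$ in $\Bl_S M$ is modeled by the total space of $\mathcal{O}(-1)\to\mathbb{P}(N_{S/M})$, fibered over $S$. Taking a fiberwise Burns--Simanca scalar-flat asymptotically Euclidean Kähler metric $\eta$ on $\Bl_0\mathbb{C}^k$, scaling it by $\epsilon^2$, and gluing to $\omega_M$ via cutoff functions on an annular region of radii comparable to a small power of $\epsilon$, produces $\omega_\epsilon$. The scalar curvature $S(\omega_\epsilon)$ differs from that of a true extremal metric by an error of order $\epsilon^\alpha$ for some $\alpha>0$, measured in appropriate weighted norms.

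For the linear theory, I would write the extremal equation as $S(\omega_\epsilon+\ddb\phi)-\xi=\text{const}$, with $\xi$ varying over a finite-dimensional space of Killing potentials. The linearization at $\phi=0$ is, modulo lower-order terms, the Lichnerowicz operator $\mathcal{D}^*\mathcal{D}$, whose kernel is spanned by such potentials. Working in weighted Hölder spaces with weights that control behavior near $E$, I would invert $\mathcal{D}^*\mathcal{D}$ modulo its kernel with an operator norm bounded uniformly in $\epsilon$. The codimension assumption $k>2$ enters decisively here: the fiberwise Green's function of $\mathcal{D}^*\mathcal{D}$ on $\Bl_0\mathbb{C}^k$ decays like $r^{4-2k}$, which is transversally integrable precisely when $k>2$, so the Fredholm theory is well-posed. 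This plays the same role as the dimension restriction $n\geq 3$ in the original point-blowup argument.

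The main obstacle I expect is the obstruction analysis. The implicit function theorem reduces the problem to showing that the projection of $S(\omega_\epsilon)$ onto $\ker\mathcal{D}^*\mathcal{D}$ vanishes to sufficient order in $\epsilon$. Two contributions dominate: a pairing with the extremal vector field of $\omega_M$, which vanishes because $\nabla S(\omega_M)$ is tangent to $S$ and therefore lifts holomorphically to $\Bl_S M$; and a leading-order contribution from infinitesimal displacements of $S$ inside $\mathscr{S}$, which by the definition $\mu_{\mathscr{S}}(S)=\int_S \mu\,\omega_M^{n-k}$ equals a pairing with $\mu_{\mathscr{S}}(S)$ and is killed by the hypothesis that $\mu_{\mathscr{S}}(S)$ is tangent to $S$. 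The technical heart of the proof is identifying this second obstruction as the moment map $\mu_{\mathscr{S}}$: one must expand the scalar curvature near $E$ carefully enough to see the global geometry of $S\subset M$ in the leading correction and then recognize the result in moment-map form, which is substantially more delicate than in the point-blowup case because the model geometry is fibered rather than isolated.
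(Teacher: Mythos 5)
Your steps (i) and (ii) match the paper's construction in spirit (the paper glues the Burns--Simanca potential in with the distance function to $S$ and proves uniform invertibility in weighted H\"older spaces by a blow-up argument, with $k>2$ entering exactly where you say), and you correctly anticipate that the leading correction is governed by the Green's function of $\mathcal{D}^*\mathcal{D}$ integrated along $S$ and is identified with $\mu_{\mathscr{S}}(S)$. The genuine gap is in step (iii), in how the hypotheses are used. Tangency of $\nabla S(\omega_M)$ and of $\mu_{\mathscr{S}}(S)$ to $S$ does \emph{not} make the projection of the error onto $\ker\mathcal{D}^*\mathcal{D}$ vanish: if $\mu_{\mathscr{S}}(S)\neq 0$ (or $\omega_M$ is extremal but not cscK), that projection is genuinely nonzero at order $\epsilon^{2k-2}$. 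What tangency buys is that the corresponding holomorphic vector fields lift to $\Bl_S M$, so these terms can be absorbed into a \emph{varying} extremal vector field rather than killed. Concretely, one must solve the modified equation $S(\omega_\epsilon+\ddb u)-\tfrac12\nabla\mathbf{l}(f)\cdot\nabla u=\mathbf{l}(f)$ with the potential $f$ ranging over the finite-dimensional space $\overline{\mathfrak{h}}$ as an unknown, working equivariantly with respect to a maximal torus $T\subset G_S$ and with a lifting map $\mathbf{l}$ that is well defined precisely because the relevant fields are tangent to $S$; your formulation, which fixes the kernel and asks the obstruction to vanish, would already fail in the model case of a torus-invariant $S$ with $\mu_{\mathscr{S}}(S)\neq0$.

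Moreover, even after this reformulation the problem is not closed: the solved-for $f$ has the expansion $S(\omega_M)+\epsilon^{2k-2}(\lambda+c_m\mu_{\mathscr{S}}(S))+f_\epsilon$, and the remainder $f_\epsilon=o(\epsilon^{2k-2})$ has no reason to generate a vector field tangent to $S$, so $\mathbf{l}(f)$ need not be the potential of a holomorphic field on the blowup and the metric is not yet extremal. The paper disposes of this by a finite-dimensional argument absent from your outline: $S\mapsto f$ is a perturbation of the moment map $\mu_{\mathscr{S}}$ on the space of submanifolds, so by the moment-map perturbation result of \cite[Proposition 8]{GSz10} one can move $S$ inside its $H^c$-orbit to a nearby $h\cdot S$ at which $f$ \emph{is} tangent, and then use $\Bl_{h\cdot S}M\cong\Bl_S M$. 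Without this extra perturbation of the submanifold (or some equivalent mechanism for the non-tangent part of the subleading error), the implicit function theorem step in your plan does not terminate.
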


The strategy of the proof is very similar to that employed in
\cite{GSz10}.
Because of technical difficulties we have not obtained the same result
when $k=2$, although it is very likely that it is true in that
case as well. 

Our result can be used to obtain many new examples of extremal
metrics. The simplest situation is when
$(M,\omega_M)$ has trivial isometry group, and so in particular
$\omega_M$ has constant scalar curvature. In this case the moment map
$\mu_{\mathscr{S}}$ is trivial, and so for any submanifold $S\subset
M$ of codimension greater than two the blowup $\mathrm{Bl}_S M$ admits
a constant scalar curvature metric in $[\omega_M] - \epsilon^2[E]$ for
small $\epsilon$. A more general result, allowing for a non-trivial
automorphism group, analogous to \cite[Theorem 2.4]{APS06}, is the
following. 

\begin{cor}
  Suppose that $\omega_M$ is an extremal metric on $M$, and let $T$ be
  a maximal torus in the isometry group of $(M,\omega_M)$. Suppose
  that $S\subset M$ has codimension greater than 2, and the action of
  $T$ preserves $S$. Then $\mathrm{Bl}_S M$ admits an extremal metric
  in $[\omega_M] - \epsilon^2 [E]$ for sufficiently small $\epsilon >
  0$. 
\end{cor}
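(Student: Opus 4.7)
The plan is to reduce the corollary directly to Theorem~\ref{thm:extremal} by verifying its two tangency hypotheses under the assumptions of the corollary. The key observation is that both $\nabla S(\omega_M)$ and $\mu_{\mathscr{S}}(S)$ will turn out to lie in the Lie algebra $\mathfrak{t}$ of the maximal torus $T$; since $T$ preserves $S$, every element of $\mathfrak{t}$ is automatically tangent to $S$ as a vector field, and the codimension hypothesis $k>2$ is given.

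To show $\nabla S(\omega_M)\in\mathfrak{t}$, I would invoke Calabi's theorem on the structure of the automorphism group of an extremal metric: the extremal vector field generates a one-parameter subgroup lying in the center of the identity component of the isometry group of $(M,\omega_M)$. Since $T$ is a maximal torus in this compact connected group, and the center of such a group is contained in every maximal torus, this gives $\nabla S(\omega_M)\in\mathfrak{t}$.

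For $\mu_{\mathscr{S}}(S)$ I would use the $G$-equivariance of the moment map. Since $t\cdot S=S$ for all $t\in T$, equivariance gives $\mathrm{Ad}^*(t)\,\mu_{\mathscr{S}}(S)=\mu_{\mathscr{S}}(t\cdot S)=\mu_{\mathscr{S}}(S)$. The $L^2$-identification $\mathfrak{g}\cong\mathfrak{g}^*$ is $G$-invariant, so once we pass to $\mathfrak{g}$ the element $\mu_{\mathscr{S}}(S)$ is fixed by the adjoint action of $T$ and therefore lies in the centralizer of $\mathfrak{t}$. In a compact Lie group the centralizer of a maximal torus coincides with the torus, which forces $\mu_{\mathscr{S}}(S)\in\mathfrak{t}$.

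I do not foresee a substantive obstacle: the argument reduces to unpacking the equivariance of the moment map and applying standard structure theory of compact Lie groups, after which Theorem~\ref{thm:extremal} produces the desired extremal metric on $\mathrm{Bl}_S M$ in the class $[\omega_M]-\epsilon^2[E]$.
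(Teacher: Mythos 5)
Your proposal is correct and follows essentially the same route as the paper: both hypotheses of Theorem~\ref{thm:extremal} are verified by placing $\nabla S(\omega_M)$ and $\mu_{\mathscr{S}}(S)$ in $\mathfrak{t}$, the former via centrality of the extremal vector field (Calabi), the latter via equivariance of $\mu_{\mathscr{S}}$ together with the fact that anything commuting with a maximal torus lies in it. The only cosmetic difference is that you centralize against $T$ directly, whereas the paper phrases it through the center of the full stabilizer of $S$, which contains $\mathfrak{t}$; the content is the same.
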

\begin{proof}
  The vector field $J\nabla S(\omega_M)$ is invariant under the adjoint
  action of the isometry group of $\omega_M$, and so it is in the
  center of the Lie algebra $\mathfrak{g}$ of the isometry group
  $G$. In particular $J\nabla S(\omega_M) \in \mathfrak{t}$, where
  $\mathfrak{t}$ is the Lie algebra of $T$. Similarly because the
  moment map $\mu_{\mathscr{S}}$ is equivariant,
  $\mu_{\mathscr{S}}(S)$ is in the center of the stabilizer of $S$
  under the infinitesimal action of $\mathfrak{g}$. By our assumption
  this stabilizer contains $\mathfrak{t}$, so any element in its
  center must belong to $\mathfrak{t}$. In particular
  $\mu_{\mathscr{S}}(S) \in \mathfrak{t}$ (here we are identifying
  $\mathfrak{g} \cong \mathfrak{g}^*$ as before), and so
  $\mu_{\mathscr{S}}(S)$ fixes $S$. Theorem~\ref{thm:extremal} then applies.
\end{proof}

This corollary applies for example to subspaces $\mathbf{P}^k \subset
\mathbf{P}^n$ as long as $n > k+2$. In this way we obtain some
extensions of the work of Hashimoto~\cite{Ha15}, who showed that $\Bl_{\mathbf{P}^1}
\mathbf{P}^n$ admits an extremal metric for all $n$.  More generally we can let $M$
be any toric manifold which admits an extremal metric, for instance a
K\"ahler-Einstein metric obtained using the existence result of
Wang-Zhu~\cite{WZ04}. We can then choose $S\subset M$ to be a toric
submanifold of codimension greater than 2. 

There are also more general submanifolds $S\subset \mathbf{P}^n$ satisfying the
assumption in Theorem~\ref{thm:extremal} that $\mu_{\mathscr{S}}(S)$
is tangent to $S$. The condition $\mu_{\mathscr{S}}(S)=0$ means that
$S\subset \mathbf{P}^n$ is a balanced embedding, and
Donaldson~\cite{Don01} showed that if $\mathrm{Aut}(S)$ is trivial,
and $S$ admits a constant scalar curvature metric, then there are
balanced embeddings $S \subset \mathbf{P}^N$ for sufficiently large
$N$. This result was generalized by the first named author~\cite{Sey} to
the case when $S$ has non-trivial automorphisms, and admits an
extremal metric (see also Mabuchi~\cite{Ma1, Mab09},
Hashimoto~\cite{Ha15B} for other work in this direction). 
As a consequence we have the following. 

\begin{cor} Let $(S,\omega_S)$ be an extremal K\"ahler manifold, with
  $\omega_S\in c_1(L)$ for a line bundle $L\to S$. Fix an integer $r >
  0$, and an embedding $S\subset \mathbf{P}^N$ using a basis of
  sections of $L^r$. If $r$ is sufficiently large, then $\mathrm{Bl}_S
  \mathbf{P}^N$ admits an extremal metric in the class $[\omega_{FS}]
  - \epsilon^2[E]$ for small $\epsilon$. 
\end{cor}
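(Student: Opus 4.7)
The plan is to apply Theorem~\ref{thm:extremal} with $M=\mathbf{P}^N$ equipped with the Fubini--Study metric $\omega_{FS}$, and with the embedding $S\subset \mathbf{P}^N$ (chosen as discussed below). There are three hypotheses of Theorem~\ref{thm:extremal} to verify: that the codimension of $S$ in $\mathbf{P}^N$ is greater than $2$, that $\nabla S(\omega_{FS})$ is tangent to $S$, and that $\mu_{\mathscr{S}}(S)$ is tangent to $S$.

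The first two are essentially free. Since $N+1=h^0(S,L^r)$ grows polynomially in $r$ by Riemann--Roch while $\dim S$ is fixed, for all sufficiently large $r$ the codimension $N-\dim S$ exceeds $2$. For the tangency of $\nabla S(\omega_{FS})$, recall that the Fubini--Study metric has constant scalar curvature, so $\nabla S(\omega_{FS})=0$ and the condition is vacuous.

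The main content is the third condition, which is where the hypothesis that $r$ is large and the existence of an extremal metric on $S$ come in. I would invoke the Seyyedali--Mabuchi generalization of Donaldson's balanced embedding theorem: since $(S,\omega_S)$ is extremal with $\omega_S\in c_1(L)$, for all sufficiently large $r$ there exists a basis of $H^0(S,L^r)$ whose associated embedding $\iota_r : S\hookrightarrow \mathbf{P}^{N}$ is \emph{relatively balanced}, meaning that the moment map integral $\mu_{\mathscr{S}}(\iota_r(S))=\int_S \mu\,\omega_{FS}^{n-k}$, viewed as an element of $\mathfrak{pu}(N+1)$, lies in the Lie algebra of the stabilizer of $\iota_r(S)$ in $PU(N+1)$. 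Any such element generates a one-parameter subgroup of $PU(N+1)$ preserving $\iota_r(S)$, hence the induced holomorphic vector field on $\mathbf{P}^N$ is automatically tangent to $S$. Different bases of $H^0(S,L^r)$ produce embeddings that differ by an element of $PGL(N+1)$, so the biholomorphism type of $\mathrm{Bl}_S\mathbf{P}^N$ is unchanged; hence I may replace the chosen embedding by the relatively balanced one without affecting the conclusion. Theorem~\ref{thm:extremal} then produces the desired extremal metric in $[\omega_{FS}]-\epsilon^2[E]$.

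The only point requiring care is matching the normalization of $\mu_{\mathscr{S}}$ in this paper with the moment map appearing in Seyyedali's notion of relative balanced embedding, but this is a matter of unwinding definitions rather than an analytical obstacle; once the conventions are aligned, the corollary follows directly from Theorem~\ref{thm:extremal} and the cited existence of relatively balanced embeddings for extremal manifolds.
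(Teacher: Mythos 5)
Your proposal is correct and follows essentially the same route as the paper: invoking Seyyedali's Theorem 1.1 on relatively balanced embeddings of extremal manifolds to guarantee that $\mu_{\mathscr{S}}(S)$ is tangent to $S$, and then applying Theorem~\ref{thm:extremal}. The additional checks you spell out (codimension growth, $\nabla S(\omega_{FS})=0$ for Fubini--Study, and basis-independence of $\mathrm{Bl}_S\mathbf{P}^N$ up to $PGL(N+1)$) are routine details the paper leaves implicit.
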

\begin{proof}
  Theorem 1.1 in \cite{Sey} implies that under the assumptions there
  exist relatively balanced embeddings $S\subset \mathbf{P}^N$ using
  a basis of sections of $L^r$ for sufficiently large $r$. This means
  precisely that under these embeddings $\mu_{\mathscr{S}}(S)$, 
  identified with a vector field on
   $\mathbf{P}^N$, is tangent to $S$. Our main result,
  Theorem~\ref{thm:extremal}, then implies the required result. 
\end{proof}

The structure of the paper is the following. In
Section~\ref{sec:approx1} we will write down a metric
$\omega_\epsilon$ on $\Bl_SM$ giving a first approximation to the
metric that we are looking for, and we will present the
main gluing result that we need to prove. In
Section~\ref{sec:linearized} we will show that the linearized operator
of our problem is invertible. We will complete the proof of our main result
in Section~\ref{sec:nonlinear} by constructing a better approximate
solution $\widetilde{\omega}_\epsilon$, in an analogous way to what was done in
\cite{APS06, GSz10}, and then controlling the relevant
non-linear terms.

\section{The gluing problem}\label{sec:approx1}

\subsection{A first approximate solution}
Suppose as in the introduction 
that $(M,\omega_M)$ is a compact K\"ahler manifold such that
$\omega_M$ is an extremal metric. Let $S\subset M$ be a codimension $k$
submanifold, where $k > 2$. Our goal in this section is to construct a
K\"ahler metric $\omega_\epsilon$ on the blowup $\Bl_SM$ in the class
$[\omega_M] - \epsilon^2[E]$ for sufficiently small $\epsilon$,
which will be a first approximation to the extremal metric that we
seek. In previous
work by Arezzo-Pacard~\cite{AP06},
Arezzo-Pacard-Singer~\cite{APS06} and the second author~\cite{GSz10},
$S$ was a point, and the approximate solution on $\Bl_SM$ was constructed by
identifying an annulus around $S$ by an annulus inside the blowup
$\Bl_0\mathbf{C}^n$. When $S$ is a submanifold, there is no longer a
standard form of a neighborhood of $S$, and so we will instead
view $\Bl_SM$ as a completion of $M\setminus S$ under a suitable
metric. In other words we will identify the complement of the
exceptional divisor in $\Bl_SM$ with $M\setminus S$, and our
constructions will primarily take place on $M\setminus S$. We then
simply need to ensure that our metric extends to a smooth metric on
$\Bl_SM$, which we will achieve by using the usual coordinate charts
covering the blowup. 

The basic building block for constructing extremal metrics on blowups
is the Burns-Simanca metric~\cite{Sim91} on $\Bl_0\mathbf{C}^k$. This is
 a scalar flat, asymptotically flat K\"ahler metric
\[ \eta = \ddb\Big(|w|^2 + \gamma(|w|) \log |w|^2 + \psi(|w|^2)\Big), \]
where $\psi : [0,\infty)$ is smooth up to the boundary, and 
 $\psi$ is in the weighted space $C^\infty_{2-k}$, i.e. $\nabla^i
\psi(t) = O(t^{2-k-i})$ for all $i$, as $t\to \infty$. In addition
$\gamma:\mathbf{R}\to\mathbf{R}$ is a cutoff function such that
$\gamma(t) = 1$ for $t<1$ and $\gamma(t) = 0$ for $t > 2$. There are
also more refined expansions of $\psi$. We will need to use that
(see e.g. \cite[Lemma 26]{GSz10})
\[ \psi(|w|^2) = |w|^{4-2k} + \widetilde{\psi}(|w|^2), \]
where $\widetilde{\psi} \in C^\infty_{1-k}$. 

We will define $\omega_\epsilon$ by using the
K\"ahler potential of the Burns-Simanca metric, but replacing $|w|$ by
the distance function $d$ to the submanifold $S$, with respect to the
metric $\omega_M$. 
Note that $d^2$ is a smooth function in a tubular neighborhood of
$S$. For small $\epsilon > 0$ let us define $r_\epsilon =
\epsilon^\alpha$ for
\[ \alpha = \frac{2k}{2k+1}. \]
In addition let $\gamma:[0,\infty) \to \mathbf{R}$ be a cutoff
function as above, and define $\gamma_2 : M\to \mathbf{R}$ by $\gamma_2 =
\gamma(r_\epsilon^{-1}d)$, and $\gamma_1 = 1-\gamma_2$. So $\gamma_1$
is supported away from $S$, while $\gamma_2$ is supported near $S$. 
 
Finally we define
\[ \omega_\epsilon = \omega_M + \epsilon^2\ddb\Big( \gamma_2 \big[
\gamma(\epsilon^{-1}d) \log (\epsilon^{-2}d^2) +
\psi(\epsilon^{-2}d^2)\big]\Big), \]
on $M\setminus S$. 

\begin{prop}\label{prop:definesmetric} For sufficiently small $\epsilon$, the form
  $\omega_\epsilon$ defines a K\"ahler metric on $M\setminus S$,
  extending to a smooth metric on $\Bl_SM$. 
\end{prop}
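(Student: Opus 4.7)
There are two things to check: that $\omega_\epsilon$ is a positive $(1,1)$-form on $M\setminus S$, and that it extends smoothly across the exceptional divisor of $\Bl_SM$. I would split $M$ into three regions according to the cutoffs. On the far piece $\{d\ge 2r_\epsilon\}$ one has $\gamma_2\equiv 0$ and so $\omega_\epsilon=\omega_M$ and both claims are trivial. The two remaining regions are the inner region $\{d\le r_\epsilon\}$, on which $\gamma_2\equiv 1$, and the outer annulus $\{r_\epsilon<d<2r_\epsilon\}$.

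On the inner region I would argue by rescaling. In Fermi coordinates $(z',w)$ along $S$ one has $d^2=|w|^2+O(|w|^4)$ and $\omega_M=\omega_S(z')+|dw|^2+O(|w|^2)$, and the dilation $\pi_\epsilon\colon(z',z)\mapsto(z',\epsilon z)$ carries $S\times\{|z|<\epsilon^{\alpha-1}\}$ onto $\{d<r_\epsilon\}$. A direct computation then gives
\[ \pi_\epsilon^*\omega_\epsilon \;=\; \omega_S(z') \;+\; \epsilon^2\,\eta(z) \;+\; \text{higher-order}, \]
where $\eta$ is the Burns-Simanca metric on $\Bl_0\mathbf{C}^k$: the terms $\epsilon^2\ddb_w\bigl[\gamma(\epsilon^{-1}d)\log(\epsilon^{-2}d^2)+\psi(\epsilon^{-2}d^2)\bigr]$ pull back to $\epsilon^2\ddb_z[\gamma(|z|)\log|z|^2+\psi(|z|^2)]$ plus errors controlled by the higher-order Taylor corrections of $d^2$ and $\omega_M$. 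Since $\omega_S(z')+\epsilon^2\eta(z)$ is a genuine K\"ahler form on $S\times\Bl_0\mathbf{C}^k$ and the remainder is dominated by the main term when $\epsilon$ is small, positivity follows on this region. For the smooth extension to $\Bl_SM$ one passes to the standard blowup charts $w=y\cdot\xi$: the pieces $\ddb|w|^2$ and $\ddb\psi(|w|^2)$ are manifestly smooth since $|w|^2=|y|^2|\xi|^2$ and $\psi$ is smooth up to $|w|=0$, while $\ddb\log|w|^2=\ddb\log|y|^2+\ddb\log|\xi|^2=\ddb\log|\xi|^2$ is the pullback of a Fubini-Study form from $\mathbf{P}^{k-1}$. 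This is exactly the classical smooth extension of $\eta$ to $\Bl_0\mathbf{C}^k$.

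The main technical point is the outer annulus, where $\gamma(\epsilon^{-1}d)$ has already vanished so that $\omega_\epsilon-\omega_M=\epsilon^2\ddb\bigl(\gamma_2\,\psi(\epsilon^{-2}d^2)\bigr)$. Expanding by Leibniz and using the weighted bounds $\nabla^i\psi(t)=O(t^{2-k-i})$ together with $|\nabla^j\gamma_2|=O(r_\epsilon^{-j})$, each of the three resulting terms is bounded by $C\epsilon^{2k-4}d^{2-2k}$; evaluating at $d\sim r_\epsilon$ gives
\[ |\omega_\epsilon-\omega_M| \;\le\; C\,\epsilon^{2k-2}\,r_\epsilon^{2-2k} \;=\; C\,\epsilon^{(2k-2)/(2k+1)}, \]
which tends to zero and so preserves positivity. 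I expect this annulus to be the main obstacle: the specific choice $\alpha=2k/(2k+1)$ is dictated by balancing these exponents, and the rest of the argument reduces to the Burns-Simanca model on the inner region and to the obvious case on the far region.
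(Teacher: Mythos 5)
Your handling of the far region and of the annulus $r_\epsilon<d<2r_\epsilon$ matches the paper's (which records $\Vert\nabla^i(\omega_\epsilon-\omega_M)\Vert=O(\epsilon^{2k-2}r_\epsilon^{2-2k-i})$ there); your intermediate bound $C\epsilon^{2k-4}d^{2-2k}$ has lost the overall factor $\epsilon^2$, but your final estimate $C\epsilon^{2k-2}r_\epsilon^{2-2k}=C\epsilon^{(2k-2)/(2k+1)}$ is the correct one. Contrary to your closing remark, however, this annulus is the easy nontrivial region; the delicate part is $\{d\le r_\epsilon\}$, and that is where your argument has a genuine gap.

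On the inner region you dilate only the normal directions and compare with $\omega_S(z')+\epsilon^2\eta(z)$, asserting that the remainder is dominated by the main term. After this anisotropic rescaling the base block of the form has size $1$ while the fiber block has size only $\epsilon^2$, so positivity is \emph{not} implied by the error being small in absolute terms: a mixed base--fiber error component must be $o(\epsilon)$, i.e.\ beaten by the geometric mean of the two blocks, and establishing this is precisely the content of the proof, not a remark. Moreover the expansions you start from are stronger than what holomorphic coordinates adapted to $S$ provide: one cannot in general arrange $d^2=|w|^2+O(|w|^4)$ and $\omega_M=\omega_S+|dw|^2+O(|w|^2)$; what is available (Lemma~\ref{lem:charts}) is $d^2=|w|^2(1+\rho)$ with $\rho$ vanishing only to first order and depending on the base point, and it is exactly this base-dependence of $\rho$ and of the metric error that produces mixed terms of borderline size (for instance $\partial_{z'}\partial_{\bar w}$ of $\epsilon^2\psi(\epsilon^{-2}d^2)$ is $O(\epsilon^{2k-2}d^{3-2k})$, which is $O(\epsilon)$ at $d\sim\epsilon$). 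These terms do turn out to be admissible, but checking them is the heart of the matter and is absent from your sketch. The paper sidesteps the anisotropic comparison entirely: it splits the inner region into $2\epsilon<d<r_\epsilon$ and $d<2\epsilon$, rescales \emph{all} coordinates by $\epsilon^{-1}$ (recentering so that $|W|<1$, respectively using the blowup-adapted coordinates $v=\epsilon^{-1}z_k$, $u_i=z_i/z_k$, $w'_j=\epsilon^{-1}w_j$), and shows that $\epsilon^{-2}\omega_\epsilon$ is a small perturbation of the product metric on $\Bl_0\mathbf{C}^k\times\mathbf{C}^{n-k}$, so all blocks are compared at the same scale; the second chart then gives both positivity and the smooth extension across the exceptional divisor simultaneously. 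Your separate $w=y\cdot\xi$ discussion of smoothness is fine as far as it goes, but note that smoothness of the extension alone does not yield positivity at the divisor --- that again requires the uniform comparison with the model metric, which is the missing quantitative step.
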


In the proof we will need the following.
\begin{lem}\label{lem:charts}
  At any point $p\in S$, we can choose coordinates $z_1,\ldots, z_k, w_1,\ldots,
  w_{n-k}$, defined for $|z|, |w| < 1$, such that $S = \{ z_i=0\}$ and
  \[ d^2 = |z|^2(1+\rho(z,w)), \]
  where $\rho = O(|z| + |w|)$, and all derivatives of $\rho$ are
  bounded. In addition 
  \[ \omega_M = \ddb( |z|^2 + |w|^2 + \phi(z,w)), \]
  where $\nabla^i \phi = O(|z|^{3-i} + |w|^{3-i})$ for $i < 3$, while
  higher order derivatives are bounded. All of these bounds can be chosen to be
  uniform in the point $p$. 
\end{lem}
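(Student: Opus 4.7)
The plan is to construct the coordinates in three stages: first straighten $S$ using the complex structure, then normalize the K\"ahler form at $p$, and finally analyze the distance function $d^2$ via the normal exponential map from $S$.

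Since $S$ is a complex submanifold of codimension $k$, standard holomorphic straightening yields coordinates $(z_1, \ldots, z_k, w_1, \ldots, w_{n-k})$ near $p$ in which $S = \{z_1 = \cdots = z_k = 0\}$. The residual freedom includes constant linear changes of the form $z \mapsto Az$, $w \mapsto Bw + Cz$ with $A, B$ invertible, which preserve the defining equations of $S$. I would exploit this freedom to achieve $g_{i\bar j}(p) = \delta_{ij}$: first choose $w$ so that $\{\partial_{w_a}|_p\}$ is an orthonormal basis of $T_p S$, then adjust via $z \mapsto Az$ and $w \mapsto w + Cz$ to make $\{\partial_{z_j}|_p\}$ orthonormal in $N_p S$ and orthogonal to $\{\partial_{w_a}|_p\}$.

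With these coordinates in place, write $\omega_M = \ddb K$ locally and subtract the pluriharmonic part of $K$. Since $g_{i\bar j}(p) = \delta_{ij}$, the Taylor expansion becomes
\[ K = |z|^2 + |w|^2 + \phi(z, w), \]
where $\phi$ vanishes to order three at the origin. Taylor's theorem then yields $\nabla^i \phi = O(|z|^{3-i} + |w|^{3-i})$ for $i < 3$, with higher derivatives uniformly bounded on any fixed coordinate ball.

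For the distance function, I would pick a smooth frame $\nu_1(w), \ldots, \nu_k(w)$ of $N_{(0, w)} S$ over a neighborhood of $p$ in $S$, orthonormal for $g$ and satisfying $\nu_j(0) = \partial_{z_j}|_p$. The normal exponential map
\[ F(w, \zeta) := \exp_{(0, w)}\!\Big( \sum_j \zeta_j \nu_j(w) \Big) \]
is a smooth diffeomorphism from a neighborhood of the origin in $\mathbf{C}^{n-k} \times \mathbf{C}^k$ onto a tubular neighborhood of $p$, and Gauss's lemma gives $d(F(w, \zeta))^2 = |\zeta|^2$. Writing $F^{-1}(z, w) = (w'(z, w), \zeta(z, w))$, the two coordinate systems cut out $S$ by the vanishing of the same $k$ complex functions and have matching differentials at $p$, so one has $\zeta_j(z, w) = z_j + \zeta_j^{(2)}(z, \bar z, w, \bar w)$, where the remainder $\zeta_j^{(2)}$ vanishes on $\{z = 0\}$ and to second order at $p$. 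Substituting into $d^2 = |\zeta|^2$ and expanding gives
\[ d^2 = |z|^2 + O\big( |z|^2(|z| + |w|) \big), \]
so that $\rho := (d^2 - |z|^2)/|z|^2 = O(|z| + |w|)$ as claimed.

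All constructions above depend smoothly on the base point $p \in S$, so since $S$ is compact the bounds are uniform in $p$. The main obstacle, I expect, is the careful bookkeeping for the expansion of $\zeta$ in terms of $(z, w)$: the key point is that $\zeta_j - z_j$ simultaneously vanishes on $\{z = 0\}$ and to second order at $p$, which is what produces the extra factor of $|z|+|w|$ beyond the naive $O(|z|^2)$ error for $d^2$. Once this structure is tracked, the stated derivative bounds on $\rho$ follow from differentiating the explicit error and using the smoothness of $d^2$ and $|z|^2$.
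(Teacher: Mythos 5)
The paper states this lemma without proof, so there is nothing to compare against directly; your construction is the natural one. The first two stages are correct and standard: holomorphic straightening of $S$, the linear normalization $g_{i\bar j}(p)=\delta_{ij}$, and subtracting the real part of the holomorphic degree-two Taylor polynomial of a local potential (which does not change $\ddb$) give $\omega_M=\ddb(|z|^2+|w|^2+\phi)$ with $\phi$ vanishing to third order at $p$, and the stated bounds on $\phi$ follow from Taylor's theorem and compactness of $S$. For the distance function, the Fermi-coordinate/Gauss-lemma argument is also sound, and you correctly isolate the key bookkeeping point: $\zeta_j-z_j$ vanishes both on $\{z=0\}$ and to second order at $p$, so by a Hadamard-type factorization $\zeta_j-z_j=O\big(|z|(|z|+|w|)\big)$, whence $d^2=|z|^2+O\big(|z|^2(|z|+|w|)\big)$, i.e.\ $\rho=O(|z|+|w|)$.

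The gap is in your last sentence, where you assert that ``the stated derivative bounds on $\rho$ follow from differentiating the explicit error.'' They do not: dividing by $|z|^2$ destroys boundedness of derivatives. Writing $d^2=H_{i\bar j}(w)z_i\bar z_j+c(z,w)$ with $c=O(|z|^3)$, where $H_{i\bar j}(w)=\langle \pi_N\partial_{z_i},\pi_N\partial_{z_j}\rangle$ is the induced Hermitian metric on the normal space along $S$, one has
\[ \rho=\frac{\big(H_{i\bar j}(w)-\delta_{ij}\big)z_i\bar z_j}{|z|^2}+\frac{c(z,w)}{|z|^2}, \]
and the first term is homogeneous of degree zero in $z$, so its $z$-derivatives behave like $|H(w)-I|/|z|\sim |w|/|z|$ and blow up as $z\to 0$ for fixed $w\neq 0$ (similarly the second term loses one power of $|z|$ per derivative beyond the first). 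This cannot be repaired by a better choice of coordinates: making $H(w)$ a constant multiple of the identity along all of $S$ would require a holomorphic frame of the normal bundle that is unitary for the induced metric, which is impossible in general; one can only normalize at $p$ to finite order. So either you must weaken the conclusion to the true weighted statement, e.g.\ $|\nabla^i(d^2-|z|^2)|\leq C_i\big(|z|^{3-i}+|w|\,|z|^{\max(2-i,0)}\big)$ with all higher derivatives of $H$ and $c$ bounded --- which is what the later estimates in the paper actually use, since there $\rho$ is only ever evaluated in rescaled charts where $|w|\lesssim\epsilon\lesssim |z|$, so that $|w|/|z|$ is bounded --- or you must record the structural expansion $d^2=H_{i\bar j}(w)z_i\bar z_j+c(z,w)$ itself. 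As literally stated, ``all derivatives of $\rho$ are bounded'' is stronger than what your argument (or any choice of holomorphic coordinates) delivers, and your proof should flag and handle this point explicitly rather than appeal to smoothness of $d^2$ and $|z|^2$.
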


\begin{proof}[Proof of Proposition~\ref{prop:definesmetric}]
  We will work on four separate regions: 
\begin{itemize}
\item
  On the set where $d > 2r_\epsilon$, we have $\omega_\epsilon =
  \omega_M$, so it is a smooth metric. 
\item  On the set $r_\epsilon/2 < d < 4r_\epsilon$, we have
  contributions from the derivatives of $\gamma_2$, but the term 
  involving $\log$ is not present. The asymptotics of $\psi$ imply
  that, measured with respect to the metric $\omega_M$, we have
  \[ \Vert \nabla^i(\omega_\epsilon - \omega_M) \Vert =
  O(\epsilon^{2k-2}r_\epsilon^{2-2k-i}), \]
  for all $i$.
  It follows that for sufficiently small $\epsilon$, the form $\omega_\epsilon$
  is also positive. 
\item
  On the set $2\epsilon < d < r_\epsilon$ we have $\gamma_2=1$ and
  $\gamma(\epsilon^{-1}d)=0$. We change coordinates, using
  Lemma~\ref{lem:charts}.  In terms of $z_1,\ldots, z_k,
w_1,\ldots, w_{n-k}$ above, we set
  \[ \begin{aligned} Z_i &= \epsilon^{-1}z_i \\
        W_j &= \epsilon^{-1}w_j.
\end{aligned} \]
By shifting the center of the coordinate system we can assume that
$|W| < 1$. 
We can compare $\epsilon^{-2}\omega_\epsilon$ with the product metric
on $\mathrm{Bl}_0 \mathbf{C}^k \times \mathbf{C}^{n-k}$. In our
coordinates a K\"ahle potential for $\epsilon^{-2}\omega_\epsilon$ is
given by
\[ \begin{aligned}
  F &= \sum_i |Z_i|^2 + \sum_j |W_j|^2 + \epsilon^{-2} \phi(\epsilon
  Z_i, \epsilon W_j) \\
&\quad + \psi(\epsilon^{-2}d^2),
\end{aligned} \]
where $d$ is given by
\[ \epsilon^{-2}d^2 = \left( \sum_i |Z_i|^2\right) (1 + \rho(\epsilon
Z_i, \epsilon W_j)). \]
At the same time the product metric has K\"ahler potential
\[ F_{prod} = \sum_i |Z_i|^2 + \sum_j |W_j|^2 + \psi\left( \sum_i
  |Z_i|^2\right), \]
and this product metric is uniformly equivalent to the Euclidean metric in the
$Z_i, W_j$ coordinates. 
We have
\[ F - F_{prod} = \epsilon^{-2}\phi(\epsilon Z_i, \epsilon W_j) +
\psi(\epsilon^{-2}d^2) - \psi(|Z|^2). \]
The estimates we have for the derivatives of $\phi$ imply, that in the
$Z_i, W_j$ coordinates
\[ \nabla^i \epsilon^{-2}\phi(\epsilon Z_i, \epsilon W_j) =
O(\epsilon^{-2}\epsilon^i |\epsilon Z|^{3-i}) = O(\epsilon
|Z|^{3-i}). \]
At the same time, we have
\[ \nabla^i(\epsilon^{-2}d^2 - |Z|^2) = O(\epsilon |Z|^{3-i}), \]
and so the decay estimates for $\psi$ imply that
\[ \nabla^i(\psi(\epsilon^{-2}d^2) - \psi(|Z|^2)) = O(\epsilon |Z|^{5-2k-i}).\]
Since $|Z| > 1$ and $5-2k < 3$, we have
\begin{equation}\label{eq:F1} \nabla^i(F - F_{prod}) =
  O(\epsilon^{i-2} d^{3-i})),
\end{equation}
using also that $d$ is comparable to $\epsilon|Z|$. 
For small $\epsilon$ the form $\epsilon^{-2}\omega_\epsilon$ will then
be a small perturbation of the product metric, since on this region $d
\ll 1$. 

\item
  Finally, to examine the set where $d < 2\epsilon$ we perform a
  different change of coordinates.
  In terms of $z_1,\ldots, z_k,
w_1,\ldots, w_{n-k}$ above, we set
\[ \begin{aligned} v &= \epsilon^{-1}z_k, u_i = \frac{z_i}{z_k}, \text{ for } i=1,
\ldots, k-1 \\
w'_j &= \epsilon^{-1}w_j\, \text{ for } j=1,\ldots, n-k. 
\end{aligned}\]
We use this chart at points where $|z_k| > \frac{1}{2}\max\{|z_1|,
\ldots, |z_{k-1}|\}$, say, and permute the coordinates appropriately
at other points. On our region, once
$\epsilon$ is sufficiently small, we have $\frac{d}{2} < |z| < 2d$. It
follows that $|v| < 2$, $|u_i| < 2$, and by changing the basepoint
for the coordinate system, we can assume that $|w'| < 1$. In these
coordinates we have
\[ d^2 = \epsilon^2 |v|^2\left( 1+ \sum_i |u_i|^2\right)(1 +
\rho(\epsilon vu_i, \epsilon v, \epsilon w')). \]

Once again we will see
 that in these coordinates $\epsilon^{-2}\omega_\epsilon$ is well
approximated by the product metric on $\Bl_0\mathbf{C}^k \times
\mathbf{C}^{n-k}$. Indeed, in these coordinates a K\"ahler potential
for $\epsilon^{-2}\omega_\epsilon$ is given by
\[ \begin{aligned}
 F &= |v|^2 + \sum_i |v|^2|u_i|^2 + \sum_j |w'_j|^2 +
  \epsilon^{-2}\phi(\epsilon vu_i, \epsilon v, \epsilon w') \\
&\quad + \gamma(\epsilon^{-1}d) \log |v|^2 \left(1 + \sum_i
  |u_i|^2\right) \\
&\quad + \gamma(\epsilon^{-1}d) \log \left(1 + \rho(\epsilon vu_i,
  \epsilon v, \epsilon w')\right) + \psi(\epsilon^{-2}d^2),
\end{aligned}\]
while a K\"ahler potential for the product metric is
\[ \begin{gathered}
 F_{prod} = |v|^2 + \sum_i |v|^2 |u_i|^2 + \sum_j |w'_j|^2 +
 \gamma(D)\log D^2  +\psi(D^2), 
\end{gathered}\]
where
\[ D^2 = |v|^2\left(1 + \sum_i |u_i|^2\right). \]

On our region we have $D < 4$, and the derivatives of $\psi$ are all
bounded. It follows that 
\begin{equation}\label{eq:nablaiF} 
\nabla^i(F - F_{prod}) = O(\epsilon), 
\end{equation}
for all $i\geq 2$, where we are taking derivatives in the $u_i, v,
w_k'$ coordinates. In particular, once $\epsilon$ is sufficiently
small, $\epsilon^{-2}\omega_\epsilon$ will define a smooth metric uniformly
equivalent to the product metric on this set.
\end{itemize}
\end{proof}

\subsection{The gluing result}
The overall strategy to proving Theorem~\ref{thm:extremal} 
is the same as in \cite{GSz10}. We first
choose a maximal torus $T$ in the stabilizer $G_S$ of the submanifold
$S$, and work $T$-equivariantly throughout. Let $H\subset G$ denote the
centralizer of $T$, and $\overline{\mathfrak{h}}$ the Hamiltonian
functions corresponding to the Lie algebra of $H$ (including the
constants). So $\dim\overline{\mathfrak{h}} = \dim H + 1$. Write
$\overline{\mathfrak{t}}\subset \overline{\mathfrak{h}}$ for the
subspace corresponding to $T$. The elements in
$\overline{\mathfrak{t}}$ lift to the blowup $\Bl_S M$ in a natural
way, giving Hamiltonians of holomorphic vector fields with respect to
the metric $\omega_\epsilon$. For this note that the function $d$ is invariant under the
action of $T$, and so $\omega_\epsilon$ is $T$-invariant. 

In \cite{GSz10} we defined a lifting of the rest
of the functions in $\overline{\mathfrak{h}}$ using cutoff functions,
but here we proceed in a slightly different way, simply pulling back
the functions under the blowdown map $\Bl_S M\to M$. 

\begin{defn}\label{defn:lifting}
  We define a map 
  \[ \mathbf{l} : \overline{\mathfrak{h}} \to C^\infty(\Bl_SM), \]
  depending on $\epsilon$, in the following way. We fix a decomposition
  $\overline{\mathfrak{h}} = \overline{\mathfrak{t}} \oplus
  \mathfrak{h}'$. We lift elements $g\in \overline{\mathfrak{t}}$ to
  $\Bl_S M$ in the natural way: if we write $\omega_\epsilon = \omega_M + \ddb A$, then
  on $M\setminus S$ we have
  \[ \mathbf{l}(g) = g + \frac{1}{2}X(A), \]
  where $X = \nabla g$ is the holomorphic vector field corresponding
  to $g$. This function extends to give a smooth function on $\Bl_S
  M$, and it is the Hamiltonian, with respect to $\omega_\epsilon$, of
  the vector field $JX$. 

  For $g\in \mathfrak{h}'$ we simply define $\mathbf{l}(g) = g$, and
  note that this also defines a smooth function on $\Bl_S M$, since
  the blowdown map $\Bl_S M\to M$ is smooth. 
\end{defn}

Given this definition, the gluing result that we need to show is the
following.

\begin{prop}\label{prop:gluing}
  Suppose that $S\in \mathscr{S}$ is such that $\nabla S(\omega_M)$
  is tangent to $S$. There are constants $\epsilon_0, c > 0$ such that
  for all $\epsilon < \epsilon_0$ we can find $u\in C^{\infty}(\Bl_S
  M)^T$ and $f\in \overline{\mathfrak{h}}$ satisfying
  \begin{equation}\label{eq:maineq}
 S(\omega_\epsilon + \ddb u) - \frac{1}{2}\nabla\mathbf{l}(f)\cdot
  \nabla u  = \mathbf{l}(f). 
\end{equation}
  In addition we have an expansion
  \[ f = S(\omega_M) + \epsilon^{2k-2}(\lambda +
  c_m\mu_{\mathscr{S}}(S)) + f_\epsilon, \]
  where $c_m, \lambda$ are constants, and $|f_\epsilon| \leq
  c\epsilon^\kappa$ for some $\kappa > 2k-2$. 
\end{prop}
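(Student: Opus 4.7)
The plan is to reduce equation \eqref{eq:maineq} to a fixed-point problem for the extremal Lichnerowicz operator $\mathcal{L}_\epsilon$ of $\omega_\epsilon$, following the strategy of \cite{APS06, GSz10}. Writing $S(\omega_\epsilon + \ddb u) = S(\omega_\epsilon) + \mathcal{L}_\epsilon u + Q_\epsilon(u)$ with $Q_\epsilon$ quadratic and higher in $u$, the equation becomes
\begin{equation*}
\mathcal{L}_\epsilon u + \tfrac{1}{2}\nabla \mathbf{l}(f)\cdot\nabla u + Q_\epsilon(u) = \mathbf{l}(f) - S(\omega_\epsilon).
\end{equation*}
By Section \ref{sec:linearized} the operator $\mathcal{L}_\epsilon$ is invertible modulo $\overline{\mathfrak{h}}$ on $T$-invariant functions, with uniform control in suitably $\epsilon$-weighted H\"older norms. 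The task is to choose the pair $(u,f)$ so that the right-hand side is small enough, in the weighted norm, for a contraction mapping to converge on a small ball.

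The first step, as announced in the introduction, is to improve $\omega_\epsilon$ to a refined approximate solution $\widetilde{\omega}_\epsilon$. On the outer region $d > r_\epsilon$ the leading error $S(\omega_\epsilon) - S(\omega_M)$ is of order $\epsilon^{2k-2}$ and traces back to the $|w|^{4-2k}$ tail of the Burns-Simanca potential. This tail can be partially cancelled by adding $\ddb$ of a biharmonic correction built from the Green's function of $\Delta^2$ on the normal bundle of $S$; the decay $r^{4-2k}\to 0$ at infinity, on which the existence of a well-behaved such correction depends, is exactly where the hypothesis $k > 2$ enters. On the inner region one then re-matches the Burns-Simanca bubble by adjusting the subleading terms of $\psi$. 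Iterating these alternating inner/outer matchings finitely many times, as in \cite[Section 5]{GSz10}, produces a $T$-invariant metric $\widetilde{\omega}_\epsilon$ and a function $\widetilde{f} = S(\omega_M) + \epsilon^{2k-2}\lambda + \cdots$ such that $S(\widetilde{\omega}_\epsilon) - \mathbf{l}(\widetilde{f}) = O(\epsilon^\kappa)$ in the weighted norm, for some $\kappa > 2k-2$.

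The new feature here, and the source of the term $c_m\mu_{\mathscr{S}}(S)$ in the expansion of $f$, is the projection of the scalar-curvature error onto $\overline{\mathfrak{h}}$, which cannot be killed by $u$ and must be absorbed into $f$. For any $g\in\overline{\mathfrak{h}}$, integration by parts together with the localization of the construction to a tubular neighborhood of $S$ gives
\begin{equation*}
\bigl\langle S(\omega_\epsilon) - S(\omega_M),\, g \bigr\rangle_{L^2(\omega_\epsilon)} = c_m\,\epsilon^{2k-2}\int_S g\,\omega_M^{n-k} \;+\; o(\epsilon^{2k-2}),
\end{equation*}
where $c_m$ is a universal constant built from the mass of the Burns-Simanca metric. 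By the definition of $\mu_{\mathscr{S}}$ and the identification $\mathfrak{g}\cong\mathfrak{g}^*$, the integral on the right is precisely $\langle \mu_{\mathscr{S}}(S), g\rangle$. Hence $f$ must carry the summand $\epsilon^{2k-2}c_m\mu_{\mathscr{S}}(S)$ in order to cancel this obstruction; the hypothesis that $\mu_{\mathscr{S}}(S)$ is tangent to $S$ guarantees that the corresponding Hamiltonian lifts cleanly to $\Bl_S M$ via Definition \ref{defn:lifting} and is controlled in the weighted norm.

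Once the constant $\lambda$ and the Hamiltonian $c_m\mu_{\mathscr{S}}(S)$ have been put into $f$, what remains is a nonlinear equation of the schematic form $\mathcal{L}_\epsilon u + \tfrac{1}{2}\nabla\mathbf{l}(f_\epsilon)\cdot\nabla u + Q_\epsilon(u) = R_\epsilon$ with $\|R_\epsilon\| = O(\epsilon^\kappa)$ and $\|\mathcal{L}_\epsilon^{-1}\|$ blowing up only polynomially in $\epsilon^{-1}$. A contraction argument on a ball of appropriate radius in the weighted H\"older space then yields $(u, f_\epsilon)$. The main obstacle, as usual in this circle of problems, is the bookkeeping of the nonlinear estimates on $Q_\epsilon$ and the transport term: these have to be carried out uniformly across the four regions identified in the proof of Proposition \ref{prop:definesmetric}, most delicately in the transition zone $r_\epsilon/2 < d < 4r_\epsilon$ where $\gamma_2$ varies, and they have to absorb the fact that derivatives of $\mathbf{l}(f)$ degenerate near the exceptional divisor. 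The choice of weight exponents, and of the scale $r_\epsilon = \epsilon^{2k/(2k+1)}$, is dictated precisely by the need to balance these competing estimates against the gain produced by the refined approximate solution.
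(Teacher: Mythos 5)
Your outline follows the same broad strategy as the paper (improve the approximate solution by a correction concentrated along $S$, identify the $\overline{\mathfrak{h}}$-projection of the error with $c_m\mu_{\mathscr{S}}(S)$, then run a contraction using the inverse of the linearized operator), and your heuristic for the origin of the $c_m\mu_{\mathscr{S}}(S)$ term is the right one. But there are concrete gaps. First, your refined approximate solution is not actually constructed: you propose a ``biharmonic correction built from the Green's function of $\Delta^2$ on the normal bundle of $S$'' followed by finitely many inner/outer matching iterations, but you never set this iteration up nor estimate the linear errors it leaves behind (the difference between $\Delta^2$ on the normal bundle and the Lichnerowicz operator $\mathcal{D}^*\mathcal{D}$ of $\omega_M$ produces terms that must be controlled in the weighted norm). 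The paper avoids this entirely by a single correction $\Gamma(x)=c\int_S G(x,y)\,dy$, where $G$ is the Green's function of $\mathcal{D}^*\mathcal{D}$ on $M$ itself, so that $\mathcal{D}^*\mathcal{D}\Gamma = h - c_m\delta_S$ distributionally and $h = c_m\mu_{\mathscr{S}}(S)+\lambda$ drops out by pairing with $g\in\overline{\mathfrak{h}}$; with this choice the outer error consists only of quadratic terms, and no iteration is needed because $\kappa$ only has to barely exceed $2k-2$.

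Second, the quantitative heart of the proposition is asserted rather than proved. The claim $|f_\epsilon|\le c\epsilon^\kappa$ with $\kappa>2k-2$ rests on the region-by-region estimate $\Vert F(0,0)\Vert_{C^{0,\alpha}_{\delta-4}}\le c\,r_\epsilon^{3-\delta}$ (with separate arguments on $N_\epsilon$, $N_{r_\epsilon}\setminus N_\epsilon$, the transition annulus where $\gamma_2$ varies, and $M\setminus N_{2r_\epsilon}$), together with the arithmetic check that for $r_\epsilon=\epsilon^{2k/(2k+1)}$ and $\delta$ close to $4-2k$ one has $r_\epsilon^{3-\delta}\approx\epsilon^{2k(2k-1)/(2k+1)}$ and $2k(2k-1)/(2k+1)>2k-2$; you acknowledge that the choice of $\alpha$ and $\delta$ ``is dictated by the need to balance these estimates'' but never verify that the balance closes. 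Relatedly, you only allow the inverse of the linearized operator to blow up ``polynomially in $\epsilon^{-1}$,'' whereas Proposition~\ref{prop:inverse} gives a bound independent of $\epsilon$ for $\delta\in(4-2k,0)$; with an unquantified polynomial loss the fixed point $(v_\epsilon,g_\epsilon)$ would only satisfy $|g_\epsilon|\le C\epsilon^{-N}r_\epsilon^{3-\delta}$ for some unknown $N$, and the conclusion $\kappa>2k-2$ — which is exactly what the Theorem~\ref{thm:extremal} argument needs in order to treat $f$ as a perturbation of the moment map — would no longer follow. As written, then, the proposal is a correct road map but omits the construction and estimates that constitute the proof.
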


Based on this proposition, the proof of Theorem~\ref{thm:extremal} is
identical to the argument in \cite[p. 1426]{GSz10}. For the reader's
convenience we give the main points here. 

\begin{proof}[Proof of Theorem~\ref{thm:extremal}]
We are assuming that $\nabla S(\omega_M)$ and $\mu_{\mathscr{S}}(S)$
are tangent to $S$. We choose our maximal torus $T$ so that
$\mu_{\mathscr{S}}\in \overline{\mathfrak{t}}$. Note that we also have
$S(\omega_M)\in \overline{\mathfrak{t}}$ because $J\nabla S(\omega_M)$ 
is in the center of $G$. The complexification $H^c$ of the group $H$
acts on the space $\mathscr{S}$, and we want to show that for
sufficiently small $\epsilon$ we can find an element $h\in H^c$ near
the identity, so that Proposition~\ref{prop:gluing} applied to the
perturbed submanifold $h\cdot S$ yields an extremal metric on
$\Bl_{h\cdot S} M$.

The key point for this is that $f$, as a map from submanifolds to $\overline{\mathfrak{h}}$ can be viewed as 
a perturbation of a moment map, 
and so \cite[Proposition 8]{GSz10} can be applied. We obtain a small
perturbation $h\cdot S$ of $S$, such that when
Proposition~\ref{prop:gluing} is applied at $h\cdot S$, then the
vector field induced by $f$ is tangent to $h\cdot S$. In particular
the metric $\omega_\epsilon + \ddb u$ constructed in
Proposition~\ref{prop:gluing} will then be an extremal metric on
$\Bl_{h\cdot S} M$. At the same time, $\Bl_{h\cdot S} M
\cong \Bl_S M$, and so we obtain the required extremal metric on
$\Bl_S M$. 
\end{proof}

\section{The linearized problem}\label{sec:linearized}
In this section we study the linearized problem corresponding to
Equation~\eqref{eq:maineq}. Let us denote by $C^\infty(\Bl_S M)^T_0$
the $T$-invariant functions $u$ on $\Bl_SM$ satisfying $\langle u,
f\rangle = 0$ for all $f\in \overline{\mathfrak{h}}$, where the inner
product is computed using $\omega_\epsilon$. We will consider the
linear operator
\[ \begin{aligned} \widetilde{L} : C^\infty(\Bl_S M)^T_0 \times
  \overline{\mathfrak{h}} &\to C^\infty(\Bl_S M)^T,  \\
  (u, f) &\mapsto L_{\omega_\epsilon}(u) - \frac{1}{2}
  \nabla\mathbf{l}(\mathbf{s})\cdot \nabla u -
  \mathbf{l}(f). \end{aligned} \]
Here $L_{\omega_\epsilon}$ denotes the linearization of the scalar curvature operator at
$\omega_\epsilon$, i.e. 
\[ S(\omega_\epsilon + \ddb u) = S(\omega_\epsilon) + L_{\omega_\epsilon}(u) + Q_{\omega_\epsilon}(u), \]
for a suitable non-linear operator $Q_{\omega_\epsilon}$, and
$\mathbf{s} = S(\omega_M)$. Recall (see e.g. \cite[Section
4.1]{GSzBook}) that we have
\[ L_{\omega_\epsilon}(u) = -\Delta^2 u - R^{\bar
  kj}\partial_j\partial_{\bar k}u, \]
in terms of the Ricci curvature $R_{j\bar k}$ of $\omega_\epsilon$. In
addition we will need to relate this to the Lichnerowicz operator
$\mathcal{D}^*\mathcal{D}$, where $\mathcal{D}u = \bar{\partial}
\nabla^{1,0}u$. We have
\[ \mathcal{D}^*\mathcal{D}u = \Delta^2 u + R^{\bar
  kj}\partial_j \partial_{\bar k}u + \frac{1}{2}\nabla
S(\omega_\epsilon)\cdot \nabla u. \]

We will show that the operator $\widetilde{L}$ is invertible, and that
we can control the norm of its inverse in suitable weighted spaces. 

\subsection{Weighted spaces}
We will next define the weighted H\"older spaces that we will use. Let us define the
weight function $\tau : M\to \mathbf{R}$ by
\[ \tau(x) = \begin{cases} 1&\, \text{ if } d(x) \geq 1 \\
  d(x) &\, \text{ if } \epsilon \leq d(x)\leq 1 \\
  \epsilon &\, \text{ if } d(x)\leq \epsilon, \end{cases}\]
and extend it to $\Bl_SM$ by continuity.  
We define the weighted space $C^{l,\alpha}_\delta$ on $\Bl_SM$,
depending on $\epsilon$, as follows. The estimate $\Vert f\Vert_{C^{l,\alpha}_\delta} <
c$ means that for any $p \in \Bl_SM$ the $C^{l,\alpha}$-norm of $f$ on
an $\omega_M$-ball of radius $\tau(p) / 10$ around $p$ is bounded by
$c\tau(p)^{\delta}$, measured with respect to the scaled up metric
$\tau(p)^{-2}\omega_\epsilon$. 

In practice we can control these weighted norms as follows. On
the region where $d > r_\epsilon$, 
the metric $\omega_\epsilon$ is
uniformly equivalent to $\omega_M$, and so on this region $\Vert
f\Vert_{C^l_\delta} < c$ means that
\[ |\nabla^i f| < c' \tau^{\delta -i}, \]
for $i\leq l$, 
with the derivatives measured using $\omega_M$. On the region
where $d < 2r_\epsilon$, as we have seen in the proof of
Proposition~\ref{prop:definesmetric},
 the scaled up metric $\epsilon^{-2}\omega_\epsilon$ is uniformly equivalent to the product
metric on $\Bl_0\mathbf{C}^k \times \mathbf{C}^{n-k}$. This in turn
is uniformly equivalent, on suitable charts, with the Euclidean metric in
terms of our coordinates $Z_i, W_j$, or $v, u_i, w_j'$ from the proof of Proposition~\ref{prop:definesmetric}. It follows that in these
coordinates $\Vert f\Vert_{C^l_\delta} < c$ means either
\begin{equation}
 |\nabla^i f| < c'\epsilon^\delta |Z|^{\delta - i} 
\end{equation}
in the $Z_i, W_j$ charts (where $\tau$ is comparable to $\epsilon|Z|$), or
\[  |\nabla^i f| < c' \epsilon^\delta, \]
in the $v, u_i, w_j'$ charts. 

We also define analogous weighted spaces on $\mathbf{C}^{n-k} \times
\Bl_0 \mathbf{C}^k$ using the weight function given by $\tau(x) = 1 +
d(x)$, where $d(x)$ is distance from the exceptional divisor in
$\Bl_0\mathbf{C}^k$, and weighted spaces on
$(\mathbf{C}^k\setminus\{0\})\times \mathbf{C}^{n-k}$ using
the weight function $\tau(x) = |z|$ in terms of the coordinate $z$ on
the $\mathbf{C}^k\setminus \{0\}$ factor. 
 
We have the following estimate of our liftings in the weighted
spaces. 
\begin{lem}\label{lem:liftestimate}
  If we have $g\in \overline{\mathfrak{h}}$ then 
  \[ \Vert \mathbf{l}(g) \Vert_{C^{l,\alpha}_{0}} \leq c \Vert
  g\Vert. \]
 Here $\Vert g\Vert$ denotes any fixed norm on the finite dimensional
 vector space $\overline{\mathfrak{h}}$. 
\end{lem}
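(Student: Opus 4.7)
The plan is to split $\overline{\mathfrak{h}} = \overline{\mathfrak{t}} \oplus \mathfrak{h}'$, bound $\mathbf{l}(g)$ separately on each summand, and work region by region as in the proof of Proposition~\ref{prop:definesmetric}. By homogeneity and finite-dimensionality of $\overline{\mathfrak{h}}$, it suffices to fix a basis and obtain bounds that are uniform in $\epsilon$. For $g \in \mathfrak{h}'$ the lifting is just the pullback $\mathbf{l}(g) = g$ along the blowdown, and $g$ is smooth on $M$ with derivatives bounded by $c\|g\|$. On the outer region $d > r_\epsilon$ one has $\omega_\epsilon = \omega_M$ and $\tau \leq 1$, so the weighted bound is immediate. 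In the rescaled charts $(Z, W)$ and $(v, u_i, w'_j)$ every coordinate derivative of $g$ costs a factor of $\epsilon$, while $\epsilon|Z| \leq r_\epsilon < 1$ in the former chart and $\tau \sim \epsilon$ in the latter; these scalings are more than enough to give the $C^{l,\alpha}_0$ estimate.

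For $g \in \overline{\mathfrak{t}}$ we have $\mathbf{l}(g) = g + \tfrac{1}{2}X(A)$ with $X = \nabla g$ and $A = \epsilon^2 \gamma_2\bigl[\gamma(\epsilon^{-1}d)\log(\epsilon^{-2}d^2) + \psi(\epsilon^{-2}d^2)\bigr]$, and the $g$-piece is controlled as above. The crucial structural input is that since $T$ stabilizes $S$, the real holomorphic vector field $X$ is tangent to $S$: writing $X^{1,0} = X^i\partial_{z_i} + Y^j\partial_{w_j}$ in the coordinates of Lemma~\ref{lem:charts} forces $X^i = O(|z|)$ and $Y^j = O(1)$, and in particular $X(d^2) = O(d^2)$. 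On the regions $d > 2\epsilon$ the $\log$ term is turned off, and the decay estimates $\psi(t) = O(t^{2-k})$, $\psi'(t) = O(t^{1-k})$ together with $X(d^2) = O(d^2)$ yield $X(A) = O(\epsilon^{2k-2} d^{4-2k})$; rewriting in the $(Z, W)$ chart gives $X(A) = O(\epsilon^2 |Z|^{4-2k})$, uniformly bounded since $k > 2$ and $|Z| > 1$ there. The same scheme, together with the fact that each rescaled-coordinate derivative of $A$ gains either a factor $|Z|^{-1}$ from the decay of $\psi$ or a factor $\epsilon$ from $\rho$, gives the full $C^{l,\alpha}_0$ estimate on these regions.

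The main obstacle is the innermost region $d < 2\epsilon$, where both the $\log$ and $\psi$ pieces of $A$ are active and one must use the chart $(v, u_i, w'_j)$. Here $\partial_v \log|v|^2 = 1/v$ contributes a term of size $\epsilon^2/v$ to $\partial_v A$, and the coefficient of $X$ in the $\partial_{w'_j}$ direction equals $Y^j/\epsilon = O(\epsilon^{-1})$, both singular as $v \to 0$ or $\epsilon \to 0$. Tangency of $X$ to $S$ produces exactly the cancellations required: the $\partial_v$-coefficient of $X$ equals $X^k/\epsilon = O(|v|)$, absorbing the $1/v$ and leaving a contribution of size $O(\epsilon^2)$; and $A$ depends on $w'_j$ only through the higher-order remainder $\rho(\epsilon v u_i, \epsilon v, \epsilon w'_j)$, so $\partial_{w'_j} A = O(\epsilon^3)$, whose product with the $O(\epsilon^{-1})$ coefficient of $X$ is again $O(\epsilon^2)$. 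The $\partial_{u_i}$ contributions are bounded without further input. Iterating for the higher derivatives in $(v, u_i, w'_j)$, using that $\rho$ and its derivatives are uniformly bounded by Lemma~\ref{lem:charts}, gives an $O(\epsilon^2)$ $C^{l,\alpha}$-bound on $X(A)$ in this chart, which implies the claimed $C^{l,\alpha}_0$ estimate.
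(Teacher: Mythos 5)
Your proposal is correct and follows essentially the same route as the paper: split $\overline{\mathfrak{h}} = \overline{\mathfrak{t}} \oplus \mathfrak{h}'$, handle $\mathfrak{h}'$ by the chain rule in the rescaled charts, and for $\overline{\mathfrak{t}}$ use tangency of $X$ to $S$ (i.e. the $\partial_{z_i}$-coefficients being $O(|z|)$) to get $\nabla^i X(A) = O(\epsilon^2)$ where the $\log$ term is active. You simply carry out more explicitly the chart computations (the cancellation of $1/v$ against the $O(|v|)$ coefficient, and the $O(\epsilon^3)$ bound on $\partial_{w'_j}A$) that the paper leaves implicit.
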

\begin{proof}
  Using the splitting $\overline{\mathfrak{h}} =
  \overline{\mathfrak{t}}\oplus \mathfrak{h}'$ from
  Definition~\ref{defn:lifting}, if $g\in \mathfrak{h}'$, then
  $\mathbf{l}(g)=g$. On the region where $d > 2\epsilon$, we certainly
  have
  \[ |\nabla^i g| < c \tau^{-i} \vert g\vert \]
  for all $i$, since in fact all the derivatives are bounded
  uniformly, and $\tau \leq 1$.

  On the region where $d < 2\epsilon$ we change coordinates to the $v,
  u_i, w_j'$ from the proof of Proposition~\ref{prop:definesmetric}, 
  and note that in terms of the local
  coordinates $z_i, w_j$ we have
  \[ \begin{aligned}
         \frac{\partial}{\partial v} &= \epsilon
         \frac{\partial}{\partial z_k} + \epsilon \sum_{i=1}^{k-1} u_i
         \frac{\partial}{\partial z_i} \\
         \frac{\partial}{\partial u_i} &= \epsilon
         v\frac{\partial}{\partial z_i} \\
         \frac{\partial}{\partial w_j'} &= \epsilon
         \frac{\partial}{\partial w_j}. 
  \end{aligned} \]
  Using that $|u_i|, |v|< 2$, we obtain the required estimate.

  If $g\in \overline{\mathfrak{t}}$, then $\mathbf{l}(g)$ is defined
  by 
  \[ \mathbf{l}(g) = g + \frac{1}{2} X(A), \]
  where $X = \nabla g$, and 
  \[ A = \epsilon^2 \gamma_2 \big[
\gamma(\epsilon^{-1}d) \log (\epsilon^{-2}d^2) +
\psi(\epsilon^{-2}d^2)\big]. \]
  At the same time, in the coordinates $z, w$ from
  Lemma~\ref{lem:charts}, the vector field $X$ in this case is of the
  form 
  \[ X = a_i \frac{\partial}{\partial z_i} +  b_j
  \frac{\partial}{\partial w_j}, \]
  where $a_i = O(|z|)$ since $X$ is tangent to $S$. Consider the
  region where $d < 2\epsilon$, which is the only place where the
  problematic $\mathrm{log}$ term appears. Here $\gamma_2 = 1$, and note that
  $X(d) = O(|z|) $, while $X(d)/d$ has bounded $z,w$-derivatives. It
  follows that on this region
  \[ \nabla^i X(A) = O(\epsilon^2), \]
  which, combined with the estimate above for $g$ implies the result
  we need. 

  Note that if the vector field $X$ were not parallel to $S$, then
  $X(A)$ would blow up near $S$. This is our reason for lifting
  elements in $\mathfrak{h}'$ in a different way. 
\end{proof}

\subsection{Controlling the inverse}
We will now think of our operator $\widetilde{L}$ as a map between
suitable weighted spaces:
\[ \widetilde{L} : C^{4,\alpha}_\delta(\Bl_S M)^T_0 \times
\overline{\mathfrak{h}} \to C^{0,\alpha}_{\delta-4}(\Bl_SM),  \]
and our goal is the following result. 
\begin{prop}\label{prop:inverse}  
  For sufficiently small $\epsilon > 0$ and $\delta\in (4-2k,0)$, 
  the operator $\widetilde{L}$ is 
  invertible, with a bound on its inverse $P$ independent of $\epsilon$. 
\end{prop}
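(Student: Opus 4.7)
I would follow the by-now standard contradiction/blow-up strategy used in Arezzo--Pacard, APS, and \cite{GSz10}, adapted to the submanifold setting. For each fixed $\epsilon$, Fredholm theory gives that $\widetilde{L}$ is a bounded Fredholm operator of index zero between the stated weighted spaces: up to the $\mathbf{l}(f)$ term and the first-order correction, it is essentially the Lichnerowicz operator $\mathcal{D}^*\mathcal{D}$ of $\omega_\epsilon$, whose kernel on a compact K\"ahler manifold consists of Hamiltonian holomorphic potentials. Since we quotient by $\overline{\mathfrak{h}}$ on the domain and this space lifts surjectively onto the kernel modulo the image (by Definition~\ref{defn:lifting} and Lemma~\ref{lem:liftestimate}), invertibility for each fixed $\epsilon$ is immediate. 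The real content is the uniform bound as $\epsilon\to 0$.

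To establish the uniform bound I would argue by contradiction: suppose there exist $\epsilon_i\to 0$ and pairs $(u_i,f_i)$ with $\Vert u_i\Vert_{C^{4,\alpha}_\delta}+\Vert f_i\Vert =1$ while $\Vert \widetilde{L}(u_i,f_i)\Vert_{C^{0,\alpha}_{\delta-4}}\to 0$. After passing to a subsequence one may assume $f_i\to f_\infty\in\overline{\mathfrak{h}}$. Choose points $p_i\in\Bl_S M$ realizing (up to a constant) the weighted norm $\Vert u_i\Vert_{C^{4,\alpha}_\delta}$, and split into three cases according to the behaviour of $d(p_i)$:
\begin{itemize}
\item[(A)] $d(p_i)\geq c>0$. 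On compact subsets of $M\setminus S$ the metrics $\omega_{\epsilon_i}$ converge smoothly to $\omega_M$ and, by the weight convention, the $u_i$ are uniformly bounded in $C^{4,\alpha}_{\mathrm{loc}}$. A diagonal subsequence converges to a limit $u_\infty$ on $M\setminus S$, solving $L_{\omega_M}u_\infty - \tfrac{1}{2}\nabla\mathbf{s}\cdot\nabla u_\infty = f_\infty$. The decay rate $\delta<0$ together with $k>2$ forces the singularity of $u_\infty$ along $S$ to be removable, so $u_\infty$ extends to $M$; orthogonality to $\overline{\mathfrak{h}}$ together with the structure of the kernel of $L_{\omega_M}$ forces $f_\infty=0$ and $u_\infty=0$, contradicting the normalization at $p_i$.
\item[(B)] $d(p_i)\to 0$ and $d(p_i)/\epsilon_i\to\infty$. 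Rescale around $p_i$ by $d(p_i)^{-2}$ and translate in the $w$ directions. The metrics converge to the flat product metric on $(\mathbf{C}^k\setminus\{0\})\times\mathbf{C}^{n-k}$, and the rescaled $u_i$, after normalization, converge to a nontrivial solution of $\Delta^2 u_\infty=0$ of growth $O(|z|^\delta)$ at infinity and at the origin. A separation-of-variables analysis on $\mathbf{C}^k\setminus\{0\}$ shows that the indicial roots of $\Delta^2$ cluster at $\{0,2-k,2,4-2k,\ldots\}$, so the interval $(4-2k,0)$ contains no indicial root; hence $u_\infty=0$, again contradicting the choice of $p_i$.
\item[(C)] $d(p_i)\leq C\epsilon_i$. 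Rescale by $\epsilon_i^{-2}$. By the proof of Proposition~\ref{prop:definesmetric} the rescaled metrics converge on compact sets to the product of the Burns--Simanca metric on $\Bl_0\mathbf{C}^k$ with the flat metric on $\mathbf{C}^{n-k}$. The limit $u_\infty$ solves $\mathcal{D}^*\mathcal{D}u_\infty=0$ there, with weighted decay $|z|^\delta$ at infinity. Since the Burns--Simanca metric is scalar-flat and ALE, Fourier decomposition in the $w$-directions reduces to a problem on $\Bl_0\mathbf{C}^k$, and the indicial-root analysis at infinity (using $k>2$ and $4-2k<\delta<0$) rules out non-constant solutions; bounded pluriharmonic functions on $\Bl_0\mathbf{C}^k\times\mathbf{C}^{n-k}$ decaying at infinity are identically zero, yielding the contradiction.
\end{itemize}

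The routine ingredients are standard: construction of the lifting and the weighted norms (already done above), Schauder estimates on balls of radius $\tau(p)/10$ (which is precisely the reason for the scaling in the definition of $C^{l,\alpha}_\delta$), and passing to the limit of the equation in each regime. The main obstacle is case (C): one must verify carefully that on $\Bl_0\mathbf{C}^k\times\mathbf{C}^{n-k}$ no solution of the homogeneous linearized equation with the prescribed weight exists. This is where the codimension assumption $k>2$ and the upper endpoint $\delta<0$ enter essentially, since they exclude both the constant solutions and the leading-order harmonic functions $|z|^{2-k}$ (etc.) that otherwise live in the kernel. Finally, the weighted estimate for $f_i$ separately follows from $\Vert\mathbf{l}(f_i)\Vert_{C^{0,\alpha}_0}\leq c\Vert f_i\Vert$ (Lemma~\ref{lem:liftestimate}) and the observation that testing against a fixed basis of $\overline{\mathfrak{h}}$ after integration recovers $f_i$ up to an error controlled by $\Vert u_i\Vert_{C^{4,\alpha}_\delta}$.
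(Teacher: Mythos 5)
Your proposal is essentially the paper's own argument: a contradiction/blow-up scheme with the same three model problems (the operator $-\mathcal{D}^*\mathcal{D}$ on $M\setminus S$ with removable singularity since $\delta>4-2k$, the flat model $(\mathbf{C}^k\setminus\{0\})\times\mathbf{C}^{n-k}$ with $\Delta^2$, and the Burns--Simanca product $\Bl_0\mathbf{C}^k\times\mathbf{C}^{n-k}$), the same use of the window $\delta\in(4-2k,0)$ and $k>2$, and only organizational differences -- the paper normalizes in $C^0_\delta$ after a uniform Schauder estimate and kills $f$ through the first global limit (Lemma~\ref{lem:MS}), while you pick norm-realizing points, split into three cases, and recover $f$ by pairing against a basis of $\overline{\mathfrak{h}}$, which works equally well. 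Two small corrections: the indicial set of $\Delta^2$ on $\mathbf{C}^k$ is $\{\ldots,3-2k,4-2k\}\cup\{0,1,2,\ldots\}$ (not containing $2-k$), and in your case (B) the separation-of-variables/indicial argument on $\mathbf{C}^k\setminus\{0\}$ only applies after the $w$-dependence is removed, e.g.\ by the same Fourier transform in $w$ you invoke in case (C) -- this is precisely how the paper's Lemmas~\ref{lem:BlxC} and~\ref{lem:CxC} treat both models.
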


We prove this result using a blowup argument, following the exposition
in \cite[Theorem 8.14]{GSzBook}. We will need the
following three lemmas. 

\begin{lem}\label{lem:MS}
  Define the linear operator
  \[ \begin{aligned} 
          L_M(u,f) &= L_{\omega_M}(u) - \frac{1}{2} \nabla
          S(\omega_M)\cdot \nabla u - f \\
        &= - \mathcal{D}^*\mathcal{D} u - f. 
\end{aligned} \]
  If $L_M(u,f) = 0$ and $u: M\setminus S\to \mathbf{R}$ is in the
  weighted space $(C^{4,\alpha}_\delta)_0$ with $\delta > 4-2k$, then
  $u, f=0$. 
\end{lem}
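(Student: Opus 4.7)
The plan is to show that any pair $(u,f)$ annihilated by $L_M$ must have $u\in\overline{\mathfrak{h}}$; combined with the orthogonality condition $u\perp\overline{\mathfrak{h}}$ built into $(C^{4,\alpha}_\delta)_0$ this forces $u=0$, and the equation then gives $f=-\mathcal{D}^*\mathcal{D}u=0$. Note that $L_M(u,f)=0$ simply reads $\mathcal{D}^*\mathcal{D}u=-f$ on $M\setminus S$, with $f\in\overline{\mathfrak{h}}$ smooth and bounded on all of $M$.

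The crucial first step is to extend $u$ smoothly across $S$. The submanifold $S$ has real codimension $2k$, and the hypothesis $\delta>4-2k$ gives $u=o(d^{4-2k})$, precisely the removable-singularity threshold for a fourth order elliptic operator whose principal part in the normal directions is the biharmonic. In local coordinates $(z,w)$ around a point of $S$ with $S=\{z=0\}$, the indicial roots of $\Delta_z^2$ are $\{j,\,j+2:j\geq 0\}\cup\{2-2k-j,\,4-2k-j:j\geq 0\}$, and under the codimension hypothesis $k>2$ these leave the interval $(4-2k,0)$ free of roots, with $\delta$ falling into this gap. Convolving against the biharmonic Green's function $|z|^{4-2k}$ disposes of the bounded inhomogeneity, and treating the remainder as an almost-biharmonic function with subcritical growth one concludes that $u$ extends to a smooth function on $M$.

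With $u$ globally smooth, the Lichnerowicz integration by parts is valid on the closed manifold $M$:
\[
\int_M|\mathcal{D}u|^2\,\omega_M^n=\int_M u\,\mathcal{D}^*\mathcal{D}u\,\omega_M^n=-\int_M uf\,\omega_M^n=0,
\]
where the last equality uses $u\perp\overline{\mathfrak{h}}\ni f$. Hence $\mathcal{D}u=0$, i.e., $\nabla^{1,0}u$ is a holomorphic vector field and $u$ is a Hamiltonian for the associated Killing vector field of $(M,\omega_M)$. The $T$-invariance of $u$, inherited from the definition of $(C^{4,\alpha}_\delta)_0$, implies this Killing field commutes with $T$ and so lies in the Lie algebra of $H$, placing $u\in\overline{\mathfrak{h}}$. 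Orthogonality then gives $u=0$, and the equation yields $f=0$. The principal obstacle is the extension step: standard removable-singularity theorems are usually formulated either at an isolated point or under a local $L^2$ hypothesis, and for large $k$ the rate $d^{\delta}$ with $\delta$ near $4-2k$ is not locally $L^2$; one has to verify that localizing to a tubular neighborhood of $S$ and using the indicial-root analysis in the normal directions goes through cleanly in the present geometric setting.
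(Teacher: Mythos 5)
Your argument is correct and follows essentially the same route as the paper: use $\delta>4-2k$ (with $S$ of real codimension $2k$) to remove the singularity and extend $u$ smoothly across $S$, then apply the standard Lichnerowicz integration by parts together with $T$-invariance and orthogonality to $\overline{\mathfrak{h}}$ to conclude $u=0$ and hence $f=0$. The paper simply states the removability step without the indicial-root discussion you supply, so your extra detail fills in, rather than replaces, its proof.
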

\begin{proof}
  The restriction on the weight $\delta$ ensures that $u$ is a
  distributional solution of $L_M(u,f)=0$ on all of $M$, and in
  particular $u$ extends smoothly to $M$. On $M$ we have the equation
  $\mathcal{D}^*\mathcal{D} u + f = 0$ and $\langle u, g\rangle = 0$ for
  all $g\in \overline{\mathfrak{h}}$. This implies that both
  $f$ and $u$ vanish identically. 
\end{proof}

\begin{lem}\label{lem:BlxC}
  If $u : \Bl_0 \mathbf{C}^k \times \mathbf{C}^{n-k} \to \mathbf{R}$
  is in the weighted space $C^{4,\alpha}_\delta$ with $\delta < 0$,
  and $L_0(u) = 0$, then $u=0$. Here $L_0$ denotes the Lichnerowicz operator
  on the product space. 
\end{lem}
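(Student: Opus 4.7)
My plan is to show that $\mathcal{D}u = 0$ and then conclude $u \equiv 0$ by classifying decaying holomorphy potentials on the product. Because the Burns--Simanca metric on $\Bl_0\mathbf{C}^k$ is scalar flat and the Euclidean metric on $\mathbf{C}^{n-k}$ is flat, the product is scalar flat, and so by the formula recalled in the excerpt the Lichnerowicz operator simplifies to $L_0 = \mathcal{D}^*\mathcal{D}$. Writing $\Delta_1, R_1, \mathcal{D}_1$ for quantities associated with the Burns--Simanca metric and $\Delta_2$ for the Euclidean Laplacian on $\mathbf{C}^{n-k}$, on the product one has
\[ L_0 = \mathcal{D}_1^*\mathcal{D}_1 + 2\Delta_1\Delta_2 + \Delta_2^2. \]

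To show $\mathcal{D}u = 0$ I would take the partial Fourier transform $\hat u(z,\xi)$ in the $w$-direction; since $u$ is only bounded (not decaying) in $w$, this is a priori a tempered distribution in $\xi$ valued in the decaying functions of $z$. The equation $L_0 u = 0$ transforms, for each $\xi \in \mathbf{R}^{2(n-k)}$, to
\[ \bigl[(\Delta_1 - |\xi|^2)^2 + R_1^{\bar b a}\partial_a\partial_{\bar b}\bigr]\hat u(z,\xi) = 0. \]
When $\xi = 0$ this is the Lichnerowicz equation $\mathcal{D}_1^*\mathcal{D}_1 \hat u(\cdot,0) = 0$ on $\Bl_0\mathbf{C}^k$, and the decay in $z$ passes to $\hat u(\cdot,0)$; but there are no nonzero decaying holomorphy potentials on the Burns--Simanca space, since the associated holomorphic vector field would extend across the exceptional divisor to a bounded holomorphic field on $\mathbf{C}^k$ decaying at infinity, hence vanish by Liouville. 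When $\xi \ne 0$ the operator is coercive: pairing with $\hat u(\cdot,\xi)$ and integrating on $\Bl_0\mathbf{C}^k$, the identity
\[ \int |\mathcal{D}_1 \hat u|^2 + 2|\xi|^2 \int |\nabla \hat u|^2 + |\xi|^4 \int |\hat u|^2 = 0 \]
forces $\hat u(\cdot,\xi) = 0$. Inverting the Fourier transform yields $u = 0$.

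The main obstacle is making the Fourier decomposition rigorous, since the lack of decay of $u$ in $w$ means $\hat u$ is only a tempered distribution in $\xi$, and the $\xi = 0$ slice---formally the $w$-average of $u$---must be interpreted carefully; this slice is also where the Burns--Simanca Liouville step carries the weight of the argument. An alternative, and perhaps cleaner, route is to establish $\mathcal{D}u = 0$ directly by a carefully cut-off integration by parts on the product; once this is done, $\nabla^{1,0}u$ is a holomorphic vector field on $\Bl_0\mathbf{C}^k \times \mathbf{C}^{n-k}$ whose components, viewed as holomorphic functions, decay as $|z| \to \infty$, hence vanish by Hartogs extension across the exceptional divisor combined with Liouville on $\mathbf{C}^k$, so that $u$ is constant and ultimately zero by its decay.
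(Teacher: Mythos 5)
Your outline starts the same way as the paper's proof (partial Fourier transform in $w$, reducing $L_0u=0$ to the family of operators $L_z-2|\xi|^2\Delta_z+|\xi|^4$ on $\Bl_0\mathbf{C}^k$), but the point you yourself flag as ``the main obstacle'' is exactly where the content of the lemma lies, and you do not resolve it. Since $u$ has no decay in $w$, $\hat u(z,\xi)$ is only a distribution in $\xi$: you cannot fix $\xi\neq 0$ and pair the equation with $\hat u(\cdot,\xi)$, so the coercivity identity $\int|\mathcal{D}_1\hat u|^2+2|\xi|^2\int|\nabla\hat u|^2+|\xi|^4\int|\hat u|^2=0$ is not available, and there is no well-defined ``$\xi=0$ slice'' to which the decay in $z$ ``passes''. (Even if the slices were honest functions, the hypothesis is only $\delta<0$, so $\hat u(\cdot,\xi)$ need not be $L^2$ in $z$, and the integrations by parts behind both your $\xi\neq0$ identity and your implicit step $\mathcal{D}_1^*\mathcal{D}_1v=0\Rightarrow\mathcal{D}_1v=0$ at $\xi=0$ are not justified when $\delta$ is close to $0$.) The paper's resolution is a duality argument: for a test function $g(z,\xi)$ supported away from $\{\xi=0\}$ one solves $L_zh-2|\xi|^2\Delta_zh+|\xi|^4h=g$ with $h$ rapidly decaying in $z$ and of bounded support in $\xi$ (using that for each fixed $\xi\neq0$ this operator is essentially self-adjoint with trivial $L^2$ kernel), and then $\langle\hat u,g\rangle=\langle L_z\hat u-2|\xi|^2\Delta_z\hat u+|\xi|^4\hat u,h\rangle=0$, so $\hat u$ is supported in $\{\xi=0\}$. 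The structure theorem for distributions supported at a point then gives $u(z,w)=\sum_i u_i(z)a_i(w)$ with $a_i$ polynomials in $w$; boundedness kills all but the constant term, and the remaining decaying solution of $L_zu=0$ on $\Bl_0\mathbf{C}^k$ vanishes by the known linear analysis on the Burns--Simanca space (Arezzo--Pacard, Kovalev--Singer). Your Liouville heuristic for holomorphy potentials is in the right spirit for that last step, but it presupposes $\mathcal{D}_1u=0$, which itself requires the cited weighted-space analysis rather than a naive integration by parts.

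Your proposed alternative --- a cut-off integration by parts on the whole product to obtain $\mathcal{D}u=0$ directly --- fails as stated: $u$ and its derivatives have no decay in the $w$-directions, so the error terms created by a cutoff at $|w|=R$ come with the volume factor $R^{2(n-k)}$ against only one power of $R^{-1}$ per derivative of the cutoff, and they grow rather than vanish (the $z$-direction boundary terms are also not small for general $\delta<0$). The partial Fourier transform, made rigorous by the duality construction above in the style of Mazzeo--Pacard, is precisely the device that replaces this failed global integration by parts; without it, or some substitute for it, the proof is incomplete.
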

\begin{proof}
  We use an argument with the Fourier transform similar to that in
  Mazzeo-Pacard~\cite{MP96} (see also Walpuski~\cite[Lemma
  A.1]{Wal}). 
  Let us write $u(z,w)$, where $z$ denotes the
  coordinate on $\Bl_0\mathbf{C}^k$. We have
  \[ \begin{aligned} L_0(u) &= (\Delta_z + \Delta_w)^2 u + R^{i\bar
      j}u_{i\bar j} \\
  &= \Delta_z^2u + 2\Delta_z\Delta_w u + \Delta_w^2u + R^{i\bar
    j}u_{i\bar j}, 
  \end{aligned} \]
  where $\Delta_z, \Delta_w$ are the Laplacians on the two factors,
  and $R^{i\bar j}$ denotes the Ricci form of $\Bl_0 \mathbf{C}^k$,
  with the indices raised. In particular the $u_{i\bar j}$ terms only
  involve derivatives on the $\Bl_0 \mathbf{C}^k$ factor. 
  
  We take the Fourier transform of
  $u$ in the $w$ variable. This way we obtain a distribution
  $\hat{u}(z,\xi)$ on $\Bl_0\mathbf{C}^k\times \mathbf{C}^{n-k}$,
  satisfying the equation
  \[ \Delta_z^2 \hat{u} - 2|\xi|^2\Delta_z \hat{u} + |\xi|^4\hat{u} +
  R^{i\bar j} \hat{u}_{i\bar j} = 0. \]
  In terms of the Lichnerowicz operator $L_z$ on $\Bl_0\mathbf{C}^k$
  this can be written as
  \[ L_z(\hat{u}) - 2|\xi|^2\Delta_z \hat{u} + |\xi|^4 \hat{u} = 0. \]
  We claim that this implies that the distribution $\hat{u}$ is
  supported on the set $\{\xi =0 \}$. To show this, let $g(z,\xi):
  \Bl_0\mathbf{C}^k\times \mathbf{C}^{n-k}\to\mathbf{R}$ be a smooth
  function with compact support away from $\{\xi =0\}$. We need to
  show that $\langle \hat{u}, g\rangle = 0$. 

  For this we claim that there is a solution $h:\Bl_0\mathbf{C}^k
  \times\mathbf{C}^{n-k} \to \mathbf{R}$ of the equation
  \[ L_z h - 2|\xi|^2 \Delta_z h + |\xi|^4 h = g, \]
  with $h$ decaying faster than any negative power of $|z|$ in the $z$
  direction, and with $h(z,\xi) =0$ if $|\xi|$ is sufficiently large
  (outside the support of $g$). This follows from the fact that for
  any fixed $\xi \ne 0$ the operator
 \[ h \mapsto L_z h - 2|\xi|^2\Delta_z h + |\xi|^4 h \]
  on $\Bl_0\mathbf{C}^k$ is essentially self-adjoint, and has trivial
  kernel in $L^2$. Indeed any solution function $h$ in $L^2$ which is
  in the kernel would have to be
  rapidly decaying by applying Schauder estimates (on balls of radius
  $r/2$ at distance $r$ from the exceptional divisor), and then an
  integration by parts shows that $h=0$. 

  We now have that
  \[ \begin{aligned} \langle \hat{u}, g\rangle &= \langle \hat{u},
    L_zh - 2|\xi|^2\Delta_z h + |\xi|^4h\rangle \\
   &= \langle L_z\hat{u} - 2|\xi|^2\Delta_z \hat{u} + |\xi|^4\hat{u},
   h\rangle \\
    &= 0, \end{aligned} \]
  where the integration by parts is justified since $h$ is rapidly
  decaying in the $z$ direction, and has bounded support in the $\xi$
  direction. 

  Now we know that $\hat{u}$ is supported on $\{\xi = 0\}$, and as a
  result it is a linear combination of derivatives of the delta
  function at the origin in $\xi$ (with coefficients given by
  functions of $z$). In other words we can write
  \[ \hat{u}(z,\xi) = \sum_{i=0}^m u_i(z) \delta^{(i)}(\xi), \]
  where each $\delta^{(i)}$ denotes an $i^{th}$ derivative of the
  delta function at the origin. It follows that
  \[ u(z,w) = \sum_{i=0}^m u_i(z) a_i(w), \]
  where each $a_i(w)$ is an $i^{th}$ degree homogeneous polynomial in
  $w$. Since $u$ is bounded, only a constant polynomial can appear,
  and so we find that $u$ is purely a function of $z$. Since $L_z(u) =
  0$, and we are assuming that $u$ decays at infinity in the $z$
  coordinate, it follows (see e.g. Arezzo-Pacard~\cite{AP06},
  Kovalev-Singer~\cite{KS01}) that 
  $u=0$. 
\end{proof}

\begin{lem}\label{lem:CxC}
  If $u : (\mathbf{C}^k\setminus \{0\}) \times \mathbf{C}^{n-k} \to
  \mathbf{R}$ is in the weighted space $C^{4,\alpha}_\delta$ with
  $\delta\in (4-2k,0)$ and $\Delta^2 u =0$, then $u=0$. 
\end{lem}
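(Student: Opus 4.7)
The proof closely parallels the argument of Lemma~\ref{lem:BlxC}, with a preliminary removable singularity step to pass from the punctured space to the ambient flat one. Since $\delta > 4-2k$, the bound $|u(z,w)| \le C|z|^\delta$ makes $u$ locally integrable across the singular locus $\{0\} \times \mathbf{C}^{n-k}$, and its growth is strictly below the transverse fundamental-solution rate $|z|^{4-2k}$ of $\Delta^2$ in the $2k$ normal directions. The standard removable singularity theorem for biharmonic functions across a codimension $2k \ge 6$ submanifold then extends $u$ to a smooth biharmonic function on all of $\mathbf{C}^k \times \mathbf{C}^{n-k}$, still satisfying $|u(z,w)| \le C|z|^\delta$ for $z \ne 0$. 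Concretely, if one integrates $u\,\Delta^2\phi$ against a test function $\phi$ on $\{|z|>\varepsilon\}$ and uses Green's identities, the boundary terms on $\{|z|=\varepsilon\}$ are dominated by $\varepsilon^{\delta+2k-4}$, which vanishes in the limit precisely because $\delta>4-2k$.

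From here I would mimic Lemma~\ref{lem:BlxC}. Since $u(z,\cdot)$ is bounded on $\mathbf{C}^{n-k}$ uniformly in $z$ away from $0$, take the Fourier transform $\hat u(z,\xi)$ in the $w$-variable, obtaining a tempered distribution satisfying
\[
(\Delta_z - |\xi|^2)^2 \hat u = 0.
\]
To show $\mathrm{supp}(\hat u) \subset \{\xi = 0\}$, test against $g$ smooth with compact support off $\{\xi = 0\}$ and produce $h$ with $(\Delta_z - |\xi|^2)^2 h = g$, where $h$ is rapidly decaying in $z$ and vanishes for $|\xi|$ outside a compact set in the support of $g$. For each fixed $\xi \ne 0$ the operator $(\Delta_z - |\xi|^2)^2$ on $\mathbf{R}^{2k}$ is essentially self-adjoint with trivial $L^2$ kernel (an $L^2$ element of the kernel would be rapidly decaying by Schauder, and then integration by parts, exploiting the positive mass term $|\xi|^2$, forces it to vanish), so the equation is solvable and the mass term produces exponential decay of $h$ in $z$. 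Integration by parts then yields $\langle \hat u, g\rangle = 0$.

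Consequently $\hat u(z,\xi) = \sum_{|\alpha| \le m} u_\alpha(z)\, \partial^\alpha \delta(\xi)$, so $u(z,w) = \sum_\alpha u_\alpha(z) w^\alpha$ is polynomial in $w$ with $z$-dependent coefficients. The uniform-in-$w$ bound $|u(z,w)| \le C|z|^\delta$ forces all but the constant term to vanish, giving $u(z,w) = u_0(z)$ with $u_0$ biharmonic on $\mathbf{C}^k$ and $|u_0(z)| \le C|z|^\delta$ for $z \ne 0$. Since $\delta < 0$, interior Schauder estimates on balls of radius $|z|/2$ give $|\Delta u_0(z)| \le C|z|^{\delta-2}$ for $|z|$ large; the harmonic function $\Delta u_0$ therefore vanishes at infinity, hence vanishes identically on $\mathbf{C}^k$ by Liouville. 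Then $u_0$ is itself harmonic and tends to zero at infinity, so $u_0 \equiv 0$.

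The main technical input is the Fourier-transform argument establishing $\mathrm{supp}(\hat u) \subset \{\xi = 0\}$, imported essentially verbatim from Lemma~\ref{lem:BlxC}. The ingredient specific to this lemma is the removable singularity step, which is sharply tuned to the weight condition $\delta > 4-2k$; I expect this is where one has to be most careful with bookkeeping of boundary terms, but it is standard for biharmonic functions in codimension at least five.
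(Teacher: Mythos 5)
Your proposal is correct and takes essentially the same route as the paper's proof: a removable-singularity step exploiting $\delta>4-2k$ to extend $u$ to a smooth biharmonic function on all of $\mathbf{C}^k\times\mathbf{C}^{n-k}$, followed by the Fourier-transform argument of Lemma~\ref{lem:BlxC} to reduce to a decaying biharmonic function of $z$ alone, which vanishes. You simply spell out details the paper leaves implicit (the $\varepsilon^{\delta+2k-4}$ boundary-term estimate and the two-step Liouville argument replacing the citation used in the blowup case).
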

\begin{proof}
  Again a local argument shows that actually $u$ extends to a smooth
  function on $\mathbf{C}^k \times \mathbf{C}^{n-k}$, satisfying
  $\Delta^2 u =0$. In addition we know that the function decays at
  infinity in the $\mathbf{C}^k$ factor. An argument identical to that
  in the previous lemma shows that $u=0$. 
\end{proof}

We will now use these results to prove Proposition~\ref{prop:inverse}
\begin{proof}[Proof of Proposition~\ref{prop:inverse}]
  We use an argument by contradiction. We follow the exposition in
  Sz\'ekelyhidi~\cite[Theorem 8.14]{GSzBook} closely. 

  To emphasize the presence of
  the parameter $\epsilon$, we will denote by $\tilde{L}_\epsilon$ our
  operator with respect to the metric $\omega_\epsilon$. Our weighted
  spaces were defined in terms of local coordinates in which
  $\omega_\epsilon$ is uniformly equivalent to the Euclidean
  metric. Using the Schauder estimates in these local charts we obtain
  a uniform constant $C$ (independent of $\epsilon$), such that
  \begin{equation}\label{eq:Schauder}
    \Vert u \Vert_{C^{4,\alpha}_\delta} + \Vert f\Vert\leq C( \Vert
      u\Vert_{C^0_\delta} + \Vert f\Vert + \Vert
      \tilde{L}_\epsilon(u, f)\Vert_{C^{0,\alpha}_{\delta-4}}), 
  \end{equation}
  for all $u, f$ (recall that $\delta\in (4-2k,0)$). 
  We want to show that with a possibly larger constant,
  the same inequality holds without the $\Vert u\Vert_{C^0_\delta}
  +\Vert f\Vert$
  terms on the right hand side, for sufficiently small $\epsilon$. 
  Arguing by contradiction, let us suppose that for a sequence
  $\epsilon_i\to 0$ we have corresponding functions $u_i$ and $f_i$ satisfying
  \[ \Vert u_i\Vert_{C^0_\delta} + \Vert f_i\Vert = 1, \text{ but } \Vert
  \tilde{L}_\epsilon(u_i, f_i)\Vert_{C^{0,\alpha}_{\delta-4}} <
  \frac{1}{i}, \]
  where note that the norms of $u_i$ are computed using
  $\omega_{\epsilon_i}$.  

  Using the equation \eqref{eq:Schauder}
  we can choose a subsequence of the $(u_i, f_i)$, converging to a limit $u :
  M\setminus S\to \mathbf{R}$, in the space $C^{4,\alpha'}_{\delta}$
  with $\alpha' < \alpha$, and $f \in \overline{\mathfrak{h}}$. 
  In particular we find that 
  \[ L_M(u, f) = 0, \]
  where $L_M$ denotes the operator in Lemma~\ref{lem:MS}. From
  Lemma~\ref{lem:MS} we have that $u, f=0$. This
  implies that $f_i \to 0$, and so up to choosing a further subsequence we
  can assume that
  \[   \left\Vert L_\epsilon(u_i) - \frac{1}{2}\nabla
    \mathbf{l}(s)\cdot \nabla u \right\Vert_{C^{0,\alpha}_{\delta-4}} <
  \frac{1}{i}, \]
  i.e. we can drop the term involving $\mathbf{l}(f_i)$ in the definition of
  $\widetilde{L}$.  

  We now need to examine the points $q_i\in \Bl_S M$, where
  $\tau_i^{-\delta} u_i$ achieves its maximum. Recall that $\tau_i$ is
  the weight function on $M$ (or on $\Bl_SM$) we defined before, with
  respect to $\omega_{\epsilon_i}$. By our assumption on $u_i$, we
  have $|\tau_i(q_i)^{-\delta} u_i(q_i)| = 1$. We already know that
  $u_i \to 0$ on compact sets away from $S$, which implies that we
  must have $\tau_i(q_i)\to 0$. We have two cases depending on whether
  $\epsilon_i^{-1} \tau_i(q_i)$ is bounded or not. 

  Suppose first that for some $R > 0$ we have $\epsilon_i^{-1} \tau_i(q_i)
  < R$ for all $i$. For sufficiently large $i$, the points $q_i$ will
  be in charts $z,w$ of the form considered in Lemma~\ref{lem:charts},
  and by changing to $Z_i, W_j$ or $u_i, v, w'$ coordinates as in the proof of
  Proposition~\ref{prop:definesmetric} we can view $q_i$ as a point in
  $\Bl_0\mathbf{C}^k \times \mathbf{C}^{n-k}$, at distance at most $R$
  from $E\times\{0\}$, where $E$ is the exceptional divisor. Moveover
  the pull-backs of $\epsilon_i^{-2}\omega_{\epsilon_i}$ in this chart will converge
  to the product metric on $\Bl_0\mathbf{C}^k\times \mathbf{C}^{n-k}$
  on compact sets. Choosing a further subsequence we can the extract a
  limit $u$ of the functions $\epsilon_i^{-\delta}u_i$, locally in
  $C^{4,\alpha'}_\delta$, where $u : \Bl_0\mathbf{C}^k\times \mathbf{C}^{n-k}\to
  \mathbf{R}$ is in the weighted space $C^{4,\alpha'}_\delta$,
  satisfying $L_0 u =0$. Lemma~\ref{lem:BlxC} implies that $u=0$,
  contradicting that $|\tau_i(q_i)^{-\delta} u_i(q_i)| = 1$. 

  Finally we suppose that $\epsilon_i^{-1}\tau_i(q_i) \to \infty$, but
  $\tau_i(q_i)\to 0$. It follows that in our charts $z,w$ from
  Lemma~\ref{lem:charts} we have $\tau(q_i) = |z|$, up to a bounded
  factor. Arguing as above, by taking a subsequence we can extract a
  limit of the $\tau(q_i)^{-\delta}u_i$, giving a function $u :
  (\mathbf{C}^k\setminus\{0\})\times \mathbf{C}^{n-k} \to \mathbf{R}$,
  in the weighted space $C^{4,\alpha'}_\delta$ satisfying $\Delta^2 u
  =0$. Lemma~\ref{lem:CxC} implies that $u=0$, which is a
  contradiction again. 

  In sum we find that there is a constant $C'$, such that for
  sufficiently small $\epsilon$ the estimate
  \[ \Vert u\Vert_{C^{4,\alpha}_\delta} + \Vert f\Vert \leq C'\Vert
  \tilde{L}_\epsilon(u, f)\Vert_{C^{0,\alpha}_{\delta-4}} \]
  holds. This shows in particular that $\tilde{L}_\epsilon$ has
  trivial kernel, and since it has index zero it must be
  invertible. In addition we obtain the required uniform bound on its
  inverse. 
\end{proof}

\section{The nonlinear equation}\label{sec:nonlinear}
In this section we will solve Equation~\ref{eq:maineq}, which will
then lead to the proof of Proposition~\ref{prop:gluing}. 

\subsection{A better approximate solution}
Our first task is to improve our approximate solution, in a similar
way as was done in Arezzo-Pacard~\cite{AP09}, 
Arezzo-Pacard-Singer~\cite{APS06} and also in
\cite{GSz10}. For this we first 
modify $\omega_M$ on $M\setminus S$, by using a $T$-invariant
solution of
\[ \mathcal{D}^*\mathcal{D} \Gamma = h, \text{ on }M\setminus S, \]
with $h\in \overline{\mathfrak{h}}$ and
\[ \Gamma = -d^{4-2k} + \tilde{\Gamma}, \]
where $d$ is the distance from $S$ as before, and $\tilde{\Gamma} =
O(d^{5-2k})$ (in fact we can have $O(d^{6-2k-\tau})$ for any small
$\tau > 0$, but we will not need this). Such a solution can be
obtained by taking the Green's function for the Lichnerowicz operator
$\mathcal{D}^*\mathcal{D}$, satisfying
\[ \mathcal{D}^*_x\mathcal{D}_x G(x, y) = -\delta_y + h_y \]
for suitable $h_y \in \mathrm{ker}(\mathcal{D})$, and integrating
along $S$: 
\[ \Gamma(x) = c \int_S G(x,y)\, dy, \]
for a suitable constant $c$.
We can ensure that $\Gamma$ is $T$-invariant and $h\in
\overline{\mathfrak{h}}$ by averaging with the $T$-action. 

In a distributional sense $\Gamma$ will then satisfy
\[ \mathcal{D}^*\mathcal{D}\Gamma = h - c_m\delta_S \text{ on }M, \]
where $c_m$ is a dimensional constant and $\delta_S$ is the current of
integration along $S$. Integrating against $g\in\overline{\mathfrak{h}}$
we have
\[ \int_M gh\,\omega^n = c_m\int_S g\,\omega^{n-k}. \]
It follows that under our identifications we have
\[ h = c_m\mu_{\mathscr{S}}(S) + \lambda, \]
where $\lambda = \mathrm{Vol}(M)^{-1}c_m$ is a constant. 

Recall that the potential $\psi$ for the Burns-Simanca metric has an
expansion
\[ \psi(|w|^2) = -|w|^{4-2k} + \tilde{\psi}(|w|^2), \]
 for large $|w|$, where $\tilde{\psi}(|w|^2) = O(|w|^{2-2k})$.
We use this to define a new approximate solution
\[ \begin{aligned}
   \tilde{\omega}_\epsilon = \omega_M + \ddb\Big(
   &-\epsilon^{2k-2}d^{4-2k} + \gamma_1 \epsilon^{2k-2}\tilde{\Gamma}
\\
& + \gamma_2
\epsilon^2\big[\gamma(\epsilon^{-1}d)\log (\epsilon^{-2}d^2) + \tilde{\psi}(\epsilon^{-2}d^2)\big]\Big).
\end{aligned} \]
 Alternatively we can also write
\[ \tilde{\omega}_\epsilon = \omega_\epsilon + \ddb\Big(
\epsilon^{2k-2} \gamma_1\Gamma\Big), \]
in terms of our earlier approximate solution $\omega_\epsilon$. 

We have
\[ \begin{aligned} \Vert \epsilon^{2k-2}\gamma_1
  \Gamma\Vert_{C^{l,\alpha}_2} &\leq c \epsilon^{2k-2} \Vert
  \Gamma\Vert_{C^{l,\alpha}_2(M \setminus N_{r_\epsilon}(S))} \\
  &\leq c \epsilon^{2k-2} r_\epsilon^{2-2k},
\end{aligned} \] 
which tends to zero as $\epsilon\to 0$. It follows that
$\tilde{\omega}_\epsilon$ is a small perturbation of
$\omega_\epsilon$, and so (see
\cite[Proposition 20]{GSz10}) that the linearized operator of
$\tilde{\omega}_\epsilon$ is a small perturbation of the linearized
operator of $\omega_\epsilon$ for sufficiently small $\epsilon$.

\subsection{The non-linear equation}
As in \cite{GSz10}, to prove Proposition~\ref{prop:gluing},
we would like to solve the equation
\begin{equation}\label{eq:main2}  S(\omega_\epsilon + \ddb u) - \frac{1}{2} \nabla\mathbf{l}(f)\cdot
\nabla u = \mathbf{l}(f), 
\end{equation}
with $u, f$ of the form
\[\begin{aligned}
    u &= \epsilon^{2k-2}\gamma_1\Gamma + v
    \\
 f &= S(\omega_M) + \epsilon^{2k-2}h + g,
\end{aligned} \]
where $\Gamma, h$ are defined in the previous subsection. Substituting
these into Equation~\eqref{eq:main2}, we obtain 
\[ \begin{aligned}
  L(v) - \frac{1}{2}X(v) &- \mathbf{l}(g) = \mathbf{l}(S(\omega_M)) -
  S(\omega_\epsilon) - \epsilon^{2k-2} L(\gamma_1\Gamma) -
  Q(u) \\ &+ \frac{1}{2}\nabla\mathbf{l}(\epsilon^{2k-2}h +g)\cdot\nabla u
  + \frac{1}{2}X(\epsilon^{2k-2}\gamma_1\Gamma) +
  \epsilon^{2k-2}\mathbf{l}(h),
\end{aligned} \]
where $X = \nabla\mathbf{l}(S(\omega_M))$, and $L =
L_{\omega_\epsilon}$. 

In the same way as in \cite{GSz10}, using the inverse $P$ of the
linearized operator given by Proposition~\ref{prop:inverse}, we can
rewrite this as a fixed point problem for the operator
\[ \begin{aligned} 
\mathcal{N} : (C^{4,\alpha}_\delta)^T_0 \times
\overline{\mathfrak{h}} &\to (C^{4,\alpha}_\delta)^T_0 \times
\overline{\mathfrak{h}} \\
(v,g) &\mapsto PF(v,g), 
\end{aligned} \]
where
\[ \begin{aligned}
 F(v,g) &= \mathbf{l}(S(\omega_M)) -
  S(\omega_\epsilon) - \epsilon^{2k-2} L(\gamma_1\Gamma) -
  Q(u) \\ &\quad + \frac{1}{2}\nabla\mathbf{l}(\epsilon^{2k-2}h +g)\cdot\nabla u
  + \frac{1}{2}X(\epsilon^{2k-2}\gamma_1\Gamma) +
  \epsilon^{2k-2}\mathbf{l}(h),
\end{aligned}\]
and $u = \epsilon^{2k-2}\gamma_1\Gamma + v$ as above. In addition we
work with a weight $\delta\in (4-2k, 0)$, but very close to
$4-2k$. Following
 the same argument as in \cite{GSz10}, the key estimate that we need is the
following.
\begin{prop}
Choose $\delta \in (4-2k,0)$ very close to $4-2k$. Let
$r_{\epsilon}=e^{\alpha}$ with $\alpha = \frac{2k}{2k+1}$ as before.
Then we have the estimate  
\[ \Vert F(0,0)\Vert_{C^{0,\alpha}_{\delta-4}} \leq  cr_{\epsilon}^{3-\delta}. \]
\end{prop}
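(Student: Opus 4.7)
The plan is to estimate $F(0,0)$ pointwise on each of the four regions used in the proof of Proposition~\ref{prop:definesmetric}---the outer region $d > 2r_\epsilon$, the transition region $r_\epsilon/2 < d < 4r_\epsilon$, the middle region $2\epsilon < d < r_\epsilon$, and the innermost region $d < 2\epsilon$---and then to translate these pointwise bounds into the weighted $C^{0,\alpha}_{\delta-4}$ norm using that $\tau$ equals $1$, $d$, or $\epsilon$ respectively on these regions.

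On the outer region the cutoffs satisfy $\gamma_1 = 1$ and $\gamma_2 = 0$, so $\omega_\epsilon = \omega_M$, the potential correction $A$ of Definition~\ref{defn:lifting} vanishes, and $\tilde\omega_\epsilon = \omega_M + \epsilon^{2k-2}\ddb\Gamma$. Here I would exploit the defining equation $\mathcal{D}^*\mathcal{D}\Gamma = h$ together with the relation between $L_{\omega_M}$ and $\mathcal{D}^*\mathcal{D}$: expanding $F(0,0)$, the terms linear in $\epsilon^{2k-2}$ cancel, and only the nonlinear contribution $Q_{\omega_M}(\epsilon^{2k-2}\Gamma)$ and the cross term $\tfrac{1}{2}\epsilon^{4k-4}\nabla\mathbf{l}(h)\cdot\nabla\Gamma$ remain. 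Using $|\nabla^i\Gamma| \leq c d^{4-2k-i}$ both are dominated by $c\epsilon^{4k-4}d^{4-4k}$, and the corresponding weighted estimate at the critical point $d \sim r_\epsilon$ reduces to the algebraic inequality $4k-4 \geq \alpha(4k-5)$, which is satisfied with slack by the choice $\alpha = 2k/(2k+1)$ as soon as $k > 2$.

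On the middle and innermost regions $\gamma_1 = 0$, so the $\Gamma$-correction drops out entirely and
\[ F(0,0) = \mathbf{l}(S(\omega_M)) + \epsilon^{2k-2}\mathbf{l}(h) - S(\omega_\epsilon). \]
The key observation for this part of the plan is that in the $Z_i, W_j$ or $u_i, v, w'_j$ charts the metric $\epsilon^{-2}\omega_\epsilon$ is a small perturbation of the Burns--Simanca product metric on $\Bl_0\mathbf{C}^k \times \mathbf{C}^{n-k}$, which is scalar-flat. Feeding the bounds $\nabla^i(F - F_{prod}) = O(\epsilon^{i-2}d^{3-i})$ and $\nabla^i(F - F_{prod}) = O(\epsilon)$ from Proposition~\ref{prop:definesmetric} into the linearization of the scalar curvature, I would show that $S(\omega_\epsilon) - S(\omega_M) = O(\epsilon^{2k-2}d^{2-2k})$ on the middle region and that $S(\omega_\epsilon)$ is uniformly bounded on the innermost region. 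The remaining pieces $\mathbf{l}(S(\omega_M)) - S(\omega_M) = \tfrac{1}{2}\nabla S(\omega_M)\cdot\nabla A$ and $\epsilon^{2k-2}\mathbf{l}(h) - \epsilon^{2k-2}h$ are further controlled via the assumption that $\nabla S(\omega_M)$ and $\nabla h$ are tangent to $S$, which kills the leading $|z|^2$-growth of $A$. Weighting by $\tau^{4-\delta}$ and substituting the respective values of $\tau$ then gives a bound smaller than $r_\epsilon^{3-\delta}$ by a positive power of $\epsilon$, once again through the specific choice of $\alpha$.

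The hardest step will be the transition region $r_\epsilon/2 < d < 4r_\epsilon$, where both cutoffs $\gamma_1, \gamma_2$ are differentiated and produce factors of $r_\epsilon^{-1}$ and $r_\epsilon^{-2}$. Here the plan is to use the precise asymptotic expansions $\psi(t) = t^{2-k} + \tilde\psi(t)$ with $\tilde\psi \in C^\infty_{1-k}$, together with $\tilde\Gamma = O(d^{5-2k})$, exploiting that the leading $d^{4-2k}$ parts of $\epsilon^2\psi(\epsilon^{-2}d^2)$ and $\epsilon^{2k-2}\Gamma$ appear globally in $\tilde\omega_\epsilon$, so that derivatives of the cutoffs only act on the faster-decaying $\tilde\psi$ and $\tilde\Gamma$. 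The main bookkeeping challenge is to verify that the large prefactors coming from the cutoff derivatives are fully absorbed by this extra decay, and that the resulting estimate joins continuously with the outer and middle estimates at $d \sim r_\epsilon$. The exponent $\alpha = 2k/(2k+1)$ has been chosen precisely so that all four regional estimates are simultaneously consistent with the target bound $r_\epsilon^{3-\delta}$.
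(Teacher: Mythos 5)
Your proposal follows essentially the same route as the paper: the same four-region decomposition, cancellation of the linear terms on the outer region via $\mathcal{D}^*\mathcal{D}\Gamma=h$, comparison with the scalar-flat Burns--Simanca product model via the estimates \eqref{eq:F1} and \eqref{eq:nablaiF} on the inner regions, the lifting lemma (tangency of the relevant vector fields) for the $\mathbf{l}$-terms, and the observation that the leading $-\epsilon^{2k-2}d^{4-2k}$ term appears globally in $\tilde{\omega}_\epsilon$ so the cutoff derivatives only hit the faster-decaying $\tilde{\psi}$ and $\tilde{\Gamma}$. A few of your intermediate pointwise claims are stronger than what these estimates actually give (e.g.\ on $d<2\epsilon$ one only gets $S(\omega_\epsilon)=O(\epsilon^{-1})$, not $O(1)$; the quadratic term is of size $\epsilon^{4k-4}d^{2-4k}$ rather than $\epsilon^{4k-4}d^{4-4k}$, so the relevant inequality is $4k-4\ge\alpha(4k-3)$; and the bound on $h$-terms comes from the definition of $\mathbf{l}$ on $\mathfrak{h}'$ rather than any tangency of $\nabla h$), but the correct, weaker bounds still give $cr_\epsilon^{3-\delta}$ for $k>2$, exactly as in the paper.
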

\begin{proof}
Let us write $F = F(0,0)$, so we have
\[ \begin{aligned} F &= \mathbf{l}(S(\omega_M)) - S(\omega_\epsilon) - \epsilon^{2k-2} L(\gamma_1\Gamma) -
  Q(\epsilon^{2k-2}\gamma_1\Gamma) \\ &+
  \frac{1}{2}\nabla\mathbf{l}(\epsilon^{2k-2}h) \cdot\nabla (\epsilon^{2k-2}\gamma_1\Gamma)
  + \frac{1}{2}X(\epsilon^{2k-2}\gamma_1\Gamma) +
  \epsilon^{2k-2}\mathbf{l}(h).
\end{aligned} \] 
We will controll $F$ separately on the four regions
$N_{\epsilon}(S)$, $N_{r_\epsilon}(S) \setminus N_{\epsilon}(S)$,
$N_{2r_\epsilon}(S)\setminus N_{r_\epsilon}(S)$, and
$M\setminus N_{2r_\epsilon}(S)$.

\begin{itemize}
\item 
On $N_{\epsilon}(S)$ we have
\[ F = \mathbf{l}(S(\omega_M)) - S(\omega_\epsilon) +
\epsilon^{2k-2}\mathbf{l}(h). \]
Note that 
\[ \Vert \mathbf{l}(S(\omega_M))
\Vert_{C^{0,\alpha}_{\delta-4}(N_{\epsilon})} \leq \epsilon^{4-\delta}
\Vert \mathbf{l}(S(\omega_M))
\Vert_{C^{0,\alpha}_{0}(N_{r_\epsilon})}\leq  c\epsilon^{4-\delta}
\Vert S(\omega_M) \Vert,\]
by Lemma~\ref{lem:liftestimate}. The term involving $h$ is even
smaller. At the same time \eqref{eq:nablaiF}
implies that on $N_\epsilon(S)$ we have  
\[ \Vert S(\omega_{\epsilon})
\Vert_{C^{0,\alpha}_{\delta-4}(N_{\epsilon})} \leq c
\epsilon^{4-\delta}\epsilon^{-1} = c\epsilon^{3-\delta},\]
since $S(\omega_{\epsilon}) = \epsilon^{-2} S(
\epsilon^{-2}\omega_\epsilon)$. In sum we obtain
\[ \Vert F\Vert_{C^{0,\alpha}_{\delta-4}(N_{\epsilon})} < C\epsilon^{3-\delta}. \]

\item On $N_{r_\epsilon}(S)\setminus N_{\epsilon}(S)$ we still have
\[ F = \mathbf{l}(S(\omega_M)) - S(\omega_\epsilon) +
\epsilon^{2k-2}\mathbf{l}(h). \]
As above, from Lemma~\ref{lem:liftestimate} we obtain on this region
that
\[ \Vert \mathbf{l}(S(\omega_M))\Vert \leq  c r_\epsilon^{4-\delta}
\Vert S(\omega_M) \Vert,\]
and we have an even better estimate for $h$. As for the scalar
curvature of $\omega_\epsilon$ we now use \eqref{eq:F1}, to see that
\[ \Vert S(\omega_{\epsilon})
\Vert_{C^{0,\alpha}_{\delta-4}(N_{r_\epsilon}\setminus N_\epsilon)} \leq c
r_\epsilon^{3-\delta},\]
and so 
\[ \Vert F\Vert_{C^{0,\alpha}_{\delta-4}(N_{r_\epsilon}\setminus
  N_\epsilon)} < Cr_\epsilon^{3-\delta}. \]

\item
On $N:=N_{2r_\epsilon}(S)\setminus N_{r_\epsilon}(S),$ we have, as above
\[\Vert \mathbf{l}(S(\omega_M)) \Vert_{C^{0,\alpha}_{\delta-4}}(N)
\leq cr_{\epsilon}^{4-\delta}\Vert \mathbf{l}(S(\omega_M))
\Vert_{C^{0,\alpha}_{0}}\leq cr_{\epsilon}^{4-\delta}.\]
In addition 
\[\Vert \frac{1}{2}\nabla\mathbf{l}(\epsilon^{2k-2}h) \cdot\nabla (\epsilon^{2k-2}\gamma_1\Gamma)
  + \frac{1}{2}X(\epsilon^{2k-2}\gamma_1\Gamma) +
  \epsilon^{2k-2}\mathbf{l}(h)\Vert_{C^{0,\alpha}_{\delta-4}}(N)\leq c
  r_{\epsilon}^{4-\delta},\]
since the largest term here is $\epsilon^{2k-2} X(\gamma_1\Gamma)$,
which contributes 
\[ \epsilon^{2k-2}r_\epsilon^{3-2k} r_\epsilon^{4-\delta} \ll
r_\epsilon^{4-\delta} \]
to the norm. 

So in this region, the main term to control in $F$ is
\[ S(\omega_\epsilon) + L(\epsilon^{2k-2}\gamma_1\Gamma) +
Q(\epsilon^{2k-2}\gamma_1\Gamma) = S(\tilde{\omega}_\epsilon). \]
Using coordinates $(z,w),$ we have
$\tilde{\omega}_\epsilon=\omega_{E}+\ddb{H},$ where 
\[\omega_{E}=\ddb (|z|^2+|w|^2)\]
 is the flat metric in these coordinates and
\[ H =  \phi(z,w) - \epsilon^{2k-2}|z|^{4-2k}( 1+\rho(z,w))^{4-2k} + \epsilon^{2k-2}\gamma_1 \tilde{\Gamma} + \gamma_2
\epsilon^2\tilde{\psi}(\epsilon^{-2}d^2). \] 
Note that 
\[ \nabla^2 H
=O(r_{\epsilon}+\epsilon^{2k-2}r_{\epsilon}^{2-2k}+\epsilon^{2k}r_{\epsilon}^{-2k})
\to 0 \,\, \textrm{as} \,\, \epsilon \to 0.\]
The dominant term above is $\epsilon^{2k-2}r_\epsilon^{2-2k}$, using
our choice of $\alpha$. 
Therefore comparing $\tilde{\omega}_\epsilon$ to the flat metric, we
get that on this region (using Lemma 21 in \cite{GSz10})
\[ \begin{aligned}\Vert S(\tilde{\omega}_\epsilon)-\Delta_{0}^2 
H\Vert_{C^{0,\alpha}_{\delta-4}(N)}&\leq
Cr_{\epsilon}^{4-\delta}\Big( \Vert \nabla^2 H\Vert_{C^{2,\alpha}(N)} \Vert
\nabla^4 H\Vert_{C^{0,\alpha}(N)} \Big)\\
&\leq Cr_{\epsilon}^{4-\delta}\epsilon^{4k-4}r_\epsilon^{2-4k}
\leq Cr_\epsilon^{3-\delta}.\end{aligned} \]
On the other hand $\Delta_{0}^2(|z|^{4-2k})=0.$ Hence
\[\Vert \Delta_{0}^2 H\Vert_{C^{0,\alpha}_{\delta-4}(N)} \leq
Cr_\epsilon^{4-\delta}(1 + \epsilon^{2k-2}r_\epsilon^{1-2k})=
O(r_\epsilon^{3-\delta}), \]
using again our choice of $\alpha$. 

\item
Finally, on $M \setminus N_{2r_\epsilon}(S)$ we have $\omega_\epsilon =
\omega_M$, and 
\[ L(\Gamma) - \frac{1}{2}X(\Gamma) = h, \]
so 
\[ F = -Q(\epsilon^{2k-2}\Gamma) + \frac{1}{2}\nabla
\mathbf{l}(\epsilon^{2k-2}h) \cdot\nabla (\epsilon^{2k-2}\Gamma). \]
Therefore, we get 

\[ \begin{aligned}\Vert F\Vert_{C^{0,\alpha}_{\delta -4}(M\setminus
    N_{2r_\epsilon})} 
  &\leq
  \Vert Q( \epsilon^{2k-2}\Gamma)\Vert_{C^{0,\alpha}_{\delta -4}(M
    \setminus N_{2r_\epsilon})}+c\epsilon^{4k-4} \Vert l(h) \Vert_{C_1^{1,\alpha}} \Vert \nabla
  \Gamma\Vert_{C^{0,\alpha}_{\delta -4}(M \setminus
    N_{2r_\epsilon})}\\
&\leq c\epsilon^{4k-4}\Big(\Vert
  \Gamma\Vert_{C^{4,\alpha}_{\delta }(M \setminus
    N_{2r_\epsilon})}\Vert \Gamma\Vert_{C^{4,\alpha}_{2 }(M \setminus
    N_{2r_\epsilon})}+ r_\epsilon^{-1}\Vert \Gamma\Vert_{C^{1,\alpha}_{\delta-4 }(M
    \setminus N_{2r_\epsilon})}\Big)\\& \leq
  c\epsilon^{4k-4}(r_\epsilon^{4-2k-\delta}r_\epsilon^{2-2k} +
  r_\epsilon^{-1})  \\
&\leq c \epsilon^{4k-4} r_\epsilon^{6-4k-\delta} =
O(r_\epsilon^{3-\delta}),\end{aligned}\]
using the choice of $\alpha$.  
We have used \cite[Proposition 25]{GSz10} and the fact that $\Vert \gamma_{1}\Gamma\Vert_{C^{4,\alpha}_{p }} \leq cr_{\epsilon}^{4-2k-p}$ for $p \geq 4-2k$ and $\Vert \gamma_{1}\Gamma\Vert_{C^{4,\alpha}_{p }} \leq c$ for $p \leq 4-2k.$
\end{itemize}
\end{proof}

The following result has the same proof as Lemma 23 in \cite{GSz10}. 
\begin{lem}\label{lem:contract}
  There is a constant $c_1 > 0$ such that if
 \[ \Vert v_i\Vert_{C^{4,\alpha}_2}, \Vert g_i\Vert
 \leq c_1, \]
  then
  \[ \Vert \mathcal{N}(v_{1}, g_{1}) -
  \mathcal{N}(v_2, g_{2})\Vert_{C^{4,\alpha}_\delta} \leq \frac{1}{2} \Vert
  (v_1,g_{1})-(v_{2},g_2)\Vert_{C^{4,\alpha}_\delta}, \]
  where $\Vert
  (v,g)\Vert_{C^{4,\alpha}_\delta}:=\Vert
  v\Vert_{C^{4,\alpha}_\delta}+\Vert
  g\Vert$
\end{lem}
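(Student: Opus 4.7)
The plan is a standard contraction mapping argument, identical in structure to \cite[Lemma 23]{GSz10}. Since $\mathcal{N}=P\circ F$ and Proposition~\ref{prop:inverse} gives a bound on $P$ uniform in $\epsilon$, the task reduces to bounding $\Vert F(v_1,g_1)-F(v_2,g_2)\Vert_{C^{0,\alpha}_{\delta-4}}$ by a small multiple of $\Vert (v_1,g_1)-(v_2,g_2)\Vert_{C^{4,\alpha}_\delta}$, where ``small'' means a constant that can be made arbitrarily small by shrinking $c_1$ and $\epsilon$. Inspecting the formula for $F$, only two of its terms depend on $(v,g)$: the nonlinear remainder $-Q(u)$ and the drift term $\tfrac{1}{2}\nabla\mathbf{l}(\epsilon^{2k-2}h+g)\cdot\nabla u$, where $u=\epsilon^{2k-2}\gamma_1\Gamma + v$. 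These are the only pieces I need to control.

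For the $Q$ piece I would write
\[ Q(u_1)-Q(u_2)=\int_0^1 DQ_{tu_1+(1-t)u_2}(v_1-v_2)\,dt, \]
and use that $DQ_u = L_{\omega_\epsilon+\ddb u}-L_{\omega_\epsilon}$, whose operator norm in the relevant weighted spaces is controlled by $\Vert u\Vert_{C^{4,\alpha}_2}$ (this is the content of \cite[Proposition 25]{GSz10}, adapted to the present setting). Since $\Vert \epsilon^{2k-2}\gamma_1\Gamma\Vert_{C^{4,\alpha}_2} = O(\epsilon^{2k-2}r_\epsilon^{2-2k})\to 0$ and $\Vert v_i\Vert_{C^{4,\alpha}_2}\leq c_1$, this yields an estimate of the form
\[ \Vert Q(u_1)-Q(u_2)\Vert_{C^{0,\alpha}_{\delta-4}} \leq c\bigl(c_1 + \epsilon^{2k-2}r_\epsilon^{2-2k}\bigr)\Vert v_1-v_2\Vert_{C^{4,\alpha}_\delta}. \]

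For the drift I split
\[ \begin{aligned} &\nabla\mathbf{l}(\epsilon^{2k-2}h+g_1)\cdot\nabla u_1 - \nabla\mathbf{l}(\epsilon^{2k-2}h+g_2)\cdot\nabla u_2 \\ &\quad = \nabla\mathbf{l}(g_1-g_2)\cdot\nabla u_1 + \nabla\mathbf{l}(\epsilon^{2k-2}h+g_2)\cdot\nabla(v_1-v_2), \end{aligned} \]
and bound each term using Lemma~\ref{lem:liftestimate}: the first piece by $c\bigl(c_1+\epsilon^{2k-2}r_\epsilon^{2-2k}\bigr)\Vert g_1-g_2\Vert$ and the second by $c\bigl(\epsilon^{2k-2}+c_1\bigr)\Vert v_1-v_2\Vert_{C^{4,\alpha}_\delta}$. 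Applying $P$ and combining all contributions, then choosing first $c_1$ and afterwards $\epsilon$ small enough, gives the required contraction factor $\tfrac{1}{2}$.

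The main obstacle to watch is that the baseline function $\epsilon^{2k-2}\gamma_1\Gamma$ is \emph{not} small in the $C^{4,\alpha}_\delta$ norm in which contraction is measured, since $\delta$ is chosen very close to $4-2k$; it is only small in the stronger $C^{4,\alpha}_2$ norm. This mismatch is exactly why the hypothesis constrains $\Vert v_i\Vert_{C^{4,\alpha}_2}$ rather than $\Vert v_i\Vert_{C^{4,\alpha}_\delta}$, and why the pointwise estimates of $DQ_u$ and the drift coefficient must be carried out in $C^{4,\alpha}_2$ before being paired with $C^{4,\alpha}_\delta$ differences.
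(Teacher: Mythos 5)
Your proposal is correct and is essentially the argument the paper itself relies on (it simply cites the proof of Lemma 23 in \cite{GSz10}): only $-Q(u)$ and the drift term depend on $(v,g)$, the $Q$-difference is handled via $DQ_u = L_{\omega_\epsilon+\ddb u}-L_{\omega_\epsilon}$ and the weighted operator-comparison estimate of \cite[Proposition 25]{GSz10} with the perturbation measured in $C^{4,\alpha}_2$, the drift is split linearly and bounded with Lemma~\ref{lem:liftestimate}, and the uniform bound on $P$ from Proposition~\ref{prop:inverse} converts this into the contraction after shrinking $c_1$ and $\epsilon$. One small correction to your closing remark: $\epsilon^{2k-2}\gamma_1\Gamma$ is in fact also small in $C^{4,\alpha}_\delta$ (its norm is $O(\epsilon^{2k-2}r_\epsilon^{4-2k-\delta})\to 0$); the real reason the hypothesis is phrased in $C^{4,\alpha}_2$ is, as you say elsewhere, that smallness of the K\"ahler potential in the weight-$2$ norm (not in $C^{4,\alpha}_\delta$, which only controls $C^{4,\alpha}_2$ up to a factor $\epsilon^{\delta-2}$) is what the comparison of linearized operators and the drift coefficient bounds require.
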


We now define the open set
\[ \mathcal{U} = \left\{ (v,g) \in (C^{4,\alpha}_\delta)^T_0 \times
\overline{\mathfrak{h}}\,:\, \Vert
  (v,g)\Vert_{C^{4,\alpha}_\delta}  \leq 2Cr_{\epsilon}^{3-\delta} \right\}, \]
where the constant $C$ is the uniform bound on the inverse operators $P$ (c.f. Proposition \ref{prop:inverse}). We then have
\begin{prop}  Suppose $\delta < 0$ is
  sufficiently close to $4-2k$. Then for $\epsilon>0$ sufficiently small, the map 
  $\mathcal{N}:\mathcal{U} \to\mathcal{U}$ is a contraction, and
  therefore has a fixed point $(v_{\epsilon},g_{\epsilon}).$ Moreover, $|g_\epsilon| \leq
  c\epsilon^\kappa$ for some $\kappa > 2k-2$.
\end{prop}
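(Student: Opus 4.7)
The plan is to apply the Banach fixed-point theorem to $\mathcal{N}$ on the open set $\mathcal{U}$, exploiting the three ingredients already assembled: the uniform invertibility of $\widetilde{L}$ from Proposition~\ref{prop:inverse}, the estimate $\|F(0,0)\|_{C^{0,\alpha}_{\delta-4}} \leq c r_\epsilon^{3-\delta}$, and the contraction estimate from Lemma~\ref{lem:contract}.

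First I would note that since $\mathcal{N}(v,g) = PF(v,g)$ and $\|P\|$ is bounded by $C$ independently of $\epsilon$, the $F(0,0)$ estimate yields
\[ \Vert \mathcal{N}(0,0)\Vert_{C^{4,\alpha}_\delta} \leq C \Vert F(0,0)\Vert_{C^{0,\alpha}_{\delta-4}} \leq C c\, r_\epsilon^{3-\delta}. \]
By adjusting the constant in the definition of $\mathcal{U}$ if necessary, we have $\mathcal{N}(0,0) \in \tfrac{1}{2}\mathcal{U}$. Since $\delta < 0$, we have $3 - \delta > 3$, so $r_\epsilon^{3-\delta} \to 0$ as $\epsilon \to 0$; in particular the radius $2C r_\epsilon^{3-\delta}$ of $\mathcal{U}$ is smaller than the threshold $c_1$ of Lemma~\ref{lem:contract} once $\epsilon$ is sufficiently small. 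Therefore, for any $(v,g) \in \mathcal{U}$ the contraction estimate applies and gives
\[ \Vert \mathcal{N}(v,g) - \mathcal{N}(0,0)\Vert_{C^{4,\alpha}_\delta} \leq \tfrac{1}{2}\Vert (v,g)\Vert_{C^{4,\alpha}_\delta} \leq C r_\epsilon^{3-\delta}, \]
so by the triangle inequality $\Vert \mathcal{N}(v,g)\Vert_{C^{4,\alpha}_\delta} \leq 2Cr_\epsilon^{3-\delta}$, showing that $\mathcal{N}$ maps $\mathcal{U}$ into itself. Combined with the contraction estimate, the Banach fixed-point theorem produces a unique fixed point $(v_\epsilon, g_\epsilon) \in \mathcal{U}$.

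It remains to verify the claimed bound on $g_\epsilon$. Since $(v_\epsilon, g_\epsilon) \in \mathcal{U}$, we have
\[ |g_\epsilon| \leq \Vert (v_\epsilon, g_\epsilon)\Vert_{C^{4,\alpha}_\delta} \leq 2Cr_\epsilon^{3-\delta} = 2C\epsilon^{\alpha(3-\delta)}, \]
so setting $\kappa = \alpha(3-\delta)$ we must check that $\kappa > 2k-2$ for $\delta$ sufficiently close to $4-2k$. As $\delta \to 4-2k$ we have $\alpha(3-\delta) \to \alpha(2k-1) = \frac{2k(2k-1)}{2k+1}$, and the inequality $\frac{2k(2k-1)}{2k+1} > 2k-2$ rearranges, after clearing the denominator, to $4k^2 - 2k > 4k^2 - 2k - 2$, which is true. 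By continuity, choosing $\delta$ sufficiently close to $4-2k$ yields $\kappa > 2k - 2$ as required.

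The plan has no single hard step: the substantive estimates were the inversion of the linearized operator, the $F(0,0)$ bound and the contraction lemma. The only subtlety at this final stage is to confirm that the exponent $\alpha(3-\delta)$ strictly exceeds $2k-2$, which is essentially why the scaling $\alpha = \frac{2k}{2k+1}$ was chosen: it is the sharp exponent that makes all the error terms balance and simultaneously leaves the required room in $\kappa$.
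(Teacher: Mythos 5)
Your argument follows the same route as the paper's proof (bound $\mathcal{N}(0,0)$ by $C\Vert F(0,0)\Vert$, show $\mathcal{N}(\mathcal{U})\subset\mathcal{U}$ by the triangle inequality together with the contraction estimate, apply Banach's fixed point theorem, and read off $\kappa$ from the radius of $\mathcal{U}$), and your exponent computation showing $\alpha(3-\delta)\to\frac{2k(2k-1)}{2k+1}>2k-2$ as $\delta\to 4-2k$ is correct and in fact more explicit than the paper's. However, there is one genuine gap: your justification for applying Lemma~\ref{lem:contract} on $\mathcal{U}$ compares the wrong norms. The hypothesis of that lemma is $\Vert v_i\Vert_{C^{4,\alpha}_2}\leq c_1$, a bound in the \emph{weight-$2$} norm, while membership in $\mathcal{U}$ only controls the weight-$\delta$ norm with $\delta<0$. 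Since the weight function satisfies $\epsilon\leq\tau\leq 1$, the relevant conversion is $\Vert v\Vert_{C^{4,\alpha}_2}\leq \epsilon^{\delta-2}\Vert v\Vert_{C^{4,\alpha}_\delta}$, and the factor $\epsilon^{\delta-2}$ blows up as $\epsilon\to 0$; so the observation that the radius $2Cr_\epsilon^{3-\delta}$ of $\mathcal{U}$ is eventually smaller than $c_1$ does not by itself license the use of the lemma.

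What is actually needed is that $2C\epsilon^{\delta-2}r_\epsilon^{3-\delta}\leq c_1$ for small $\epsilon$, i.e.\ that the exponent $\delta-2+\alpha(3-\delta)$ is positive. At $\delta=4-2k$ this exponent equals $2-2k+\frac{2k(2k-1)}{2k+1}=\frac{2}{2k+1}>0$, so by continuity it remains positive for $\delta$ sufficiently close to $4-2k$, and the required smallness does hold. This verification is precisely the first step of the paper's proof, which you omitted; once it is inserted, the remainder of your argument is correct and coincides with the paper's.
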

\begin{proof}
 First note that if $(v,g) \in \mathcal{U}$, then we have 
 \[\Vert v\Vert_{C^{4,\alpha}_2}\leq \epsilon^{\delta -2} \Vert v\Vert_{C^{4,\alpha}_{\delta}} \leq 2C\epsilon^{\delta -2}r_{\epsilon}^{3-\delta} \leq c_{1},\] for sufficiently small $\epsilon $. 
 Therefore, Lemma~\ref{lem:contract} applies on  $\mathcal{U}$.  We only need to check $\mathcal{N}(\mathcal{U}) \subset \mathcal{U}$. For this we have
  \[ \begin{aligned} \Vert \mathcal{N}(v,g)\Vert_{C^{4,\alpha}_\delta}
    &\leq \Vert \mathcal{N}(v,g) -
    \mathcal{N}(0,0)\Vert_{C^{4,\alpha}_\delta} + \Vert
    \mathcal{N}(0,0)\Vert_{C^{4,\alpha}_\delta} \\
    &\leq \frac{1}{2}\Vert (v,g)\Vert_{C^{4,\alpha}_\delta} + \Vert
    \mathcal{N}(0,0)\Vert_{C^{4,\alpha}_\delta}\\&\leq Cr_{\epsilon}^{3-\delta}+ C\Vert F(0,0)\Vert_{C^{0,\alpha}_{\delta-4}} \\&\leq 2Cr_{\epsilon}^{3-\delta} . \end{aligned} \]
Therefore we obtain a fixed point $(v_\epsilon, g_\epsilon)$, with
$|g_\epsilon| \leq 2Cr_\epsilon^{3-\delta}$. In addition our choice of
$\alpha$ ensures that when $\delta$ is
sufficiently close to $4-2k$, then $|g_\epsilon| \leq
c\epsilon^{\kappa}$, for some $\kappa > 2k-2$. 
\end{proof}

The fixed point $(v_\epsilon, g_\epsilon)$ gives the solution of
\eqref{eq:main2}, and this completes the proof of Proposition 5.

\subsection*{Acknowledgements}
The authors are grateful to Michael Singer, and Rafe Mazzeo for
helpful suggestions.

\end{document}